\newtheorem*{thm*}{Theorem}
\newtheorem*{conj*}{Conjecture}
\newtheorem*{remark}{Remark}
\newtheorem{theorem}{Theorem}[section]
\newtheorem{cor}[theorem]{Corollary}
\newtheorem{lemma}[theorem]{Lemma}
\newtheorem{prop}[theorem]{Proposition}
\newtheorem{proposition}[theorem]{Proposition}
\newtheorem{example}{Example}[section]
\newtheorem*{example*}{Example}
\newtheorem*{TraceFormula}{Eichler-Selberg Trace Formula}
\newcommand{\Z}{\mathbb{Z}}
\newcommand{\Q}{\mathbb{Q}}
\newcommand{\F}{\mathbb{F}}
\newcommand{\C}{\mathbb{C}}
\newcommand{\SL}{\operatorname{SL}}
\renewcommand{\Tr}{\operatorname{Tr}}
\newcommand{\ep}{\varepsilon}
\DeclarePairedDelimiterX{\pmodx}[1]{(}{)}{{\operator@font mod}\mkern6mu#1}
\renewcommand{\pmod}{%
  \allowbreak
  \if@display\mkern18mu\else\mkern8mu\fi
  \pmodx
}
\numberwithin{equation}{section}
\begin{document}
\title{Supersingular loci from traces of Hecke Operators}

\author[K. Gomez, K. Lakein, A. Larsen]{Kevin Gomez, Kaya Lakein, and Anne Larsen}
\address{Department of Mathematics, Vanderbilt University, Nashville, TN 37235}
\email[K. Gomez]{kevin.j.gomez@vanderbilt.edu}
\address{Department of Mathematics, Stanford University, Stanford, CA 94305}
\email[K. Lakein]{epi2@stanford.edu}
\address{Department of Mathematics, Harvard University, Cambridge, MA 02138}
\email[A. Larsen]{larsen@college.harvard.edu}

\begin{abstract}
A classical observation of Deligne shows that, for any prime $p \geq 5$, the divisor polynomial of the Eisenstein series $E_{p-1}(z)$ mod $p$ is closely related to the supersingular polynomial at $p$,
\begin{align*}
    S_p(x) := \prod_{E/\overline{\F}_p \text{ supersingular}}(x-j(E))\,\,  \in \F_p[x].
\end{align*}
Deuring, Hasse, and Kaneko and Zagier found other families of modular forms which also give the supersingular polynomial at $p$.
In a new approach, we prove an analogue of Deligne's result for the Hecke trace forms $T_k(z)$ defined by the Hecke action on the space of cusp forms $S_k$. We use the Eichler-Selberg trace formula to identify congruences between trace forms of different weights mod $p$, and then relate their divisor polynomials to $S_p(x)$ using Deligne's observation.
\end{abstract}

\maketitle

\section{Introduction and Statement of Results}\label{sec: intro}
Let $K$ be a field of characteristic $p > 0$, and let
$\overline{K}$ be its algebraic closure. An elliptic curve $E/K$ is called \textit{supersingular} if the group $E(\overline{K})$ has no $p$-torsion. This condition depends only on the $j$-invariant of $E$, and there are only finitely many supersingular $j$-invariants in $\overline{\F}_p$ \cite{K-Z}. 
Thus, the \textit{supersingular locus at p} can be identified with the polynomial
\begin{equation}
    S_p(x) := \prod_{E/\overline{\F}_p \text{ supersingular}}(x-j(E)).
\end{equation}
If $E/\overline{\F}_p$ is supersingular, then $j(E) \in \F_{p^2}$ \cite[Theorem 3.1]{Silverman}, hence $S_p(x)$ splits completely in $\F_{p^2}$. Moreover, the Galois conjugate of any $j$-invariant of a supersingular elliptic curve over $\overline{\F}_p$ will be the $j$-invariant of another supersingular elliptic curve over $\overline{\F}_p$, therefore $S_p(x) \in \F_p[x]$ factors into linear and quadratic irreducibles in $\F_p[x]$. The objective of this paper is to determine $S_p(x)$ using the theory of modular forms, in particular the theory of Hecke operators on spaces of integral weight cusp forms for $\SL_2(\Z)$.

Besides $S_p(x)$, it will turn out to be convenient to define the closely related polynomial
\begin{equation}
    \widetilde{S}_p(x) := \prod_{\substack{E/\overline{\F}_p \text{ supersingular} \\ j(E) \neq 0,1728}} (x-j(E)).
\end{equation}
If $p \geq 5$, then $\widetilde{S}_p(x)$ has degree $\lfloor \frac{p}{12}\rfloor$, and
$S_p(x) = x^{\delta_p} (x-1728)^{\ep_p} \widetilde{S}_p(x)$ \cite[Proposition 2.49]{OnoCBMS}, where
\begin{equation}\label{def: deltaepsilon}
    \delta_p := \begin{dcases} 0 & \text{if } p \equiv 1 \pmod 3, \\ 1 & \text{if } p \equiv 2 \pmod 3, \end{dcases} \quad \text{ and } \quad \ep_p := \begin{dcases} 0 & \text{if } p \equiv 1 \pmod 4, \\ 1 & \text{if } p \equiv 3 \pmod 4. \end{dcases}
\end{equation}
\newpage
For instance, the factorizations of $S_p(x)$, $\widetilde{S}_p(x)$ in $\F_p[x]$ for several primes $p \geq 5$ are as follows:
\begin{table}[H]
\centering
\begin{tabu}{|c|l|l|}
    \hline
    $p$ & $S_p(x)$ & $\widetilde{S}_p(x)$ \\ \hline
    $5$ & $x$ & $1$ \\ \hline
    $7$ & $x+1$ & $1$ \\ \hline
    $13$ & $x+8$ & $x+8$ \\ \hline
    $17$ & $x(x+9)$ & $x+9$ \\ \hline
    $29$ & $x(x+4)(x+27)$ & $(x+4)(x+27)$ \\ \hline
    $37$ & $(x+29)(x^2+31x+31)$ & $(x+29)(x^2+31x+31)$ \\ \hline
\end{tabu}
\vspace{2mm}
\caption{Supersingular loci for small primes $p$}
\vspace{-6mm}
\end{table}

For any even integer $k \geq 4$, we denote the space of modular forms of weight $k$ on $\SL_2(\Z)$ by $M_k$, and the subspace of cusp forms by $S_k$. Any such $k$ can be written uniquely in the form $k = 12m+4\delta+6\ep$ with $m \in \Z_{\geq 0}$, $\delta \in \{0,1,2\}$, and $\ep \in \{0,1\}$. Let $\Delta(z) \in S_{12}$ denote the usual discriminant function, and let $E_4(z)$ and $E_6(z)$ denote the Eisenstein series of weights $4$ and $6$, respectively. Then if $f \in M_k$, we have
\begin{equation}\label{eq: fdecomposition}
    f(z) = \Delta(z)^mE_4(z)^\delta E_6(z)^\ep F(f;j(z))
\end{equation} 
for some polynomial $F(f;x) \in \C[x]$ of degree $m$, which has leading coefficient equal to the constant term of the Fourier expansion of $f(z)$ \cite{K-Z}. We call $F(f;x)$ the \textit{divisor polynomial} of $f$.

A classical observation of Deligne
\cite{Serre}
establishes that
\begin{equation}\label{eq: Deligne}
    S_p(x) \equiv x^{\delta_p} (x-1728)^{\ep_p} F(E_{p-1}; x) \pmod p.
\end{equation}
Combined with \eqref{eq: fdecomposition} and the fact that $E_{p-1}(z) \equiv 1 \pmod p$ for any prime $p \geq 5$ \cite[Lemma 1.22]{OnoCBMS}, this relation yields an explicit expression for $S_p(x)$.
The Eisenstein series $E_{p-1}(z)$ is, however, not the only modular form that can be used to obtain information about the supersingular locus at $p$.
Deuring and Hasse \cite{Serre} proved the analogue of \eqref{eq: Deligne} for the modular forms $H_{p-1} \in M_{p-1}$, defined as the coefficient of $x^{p-1}$ in the polynomial
\begin{equation*}
    (1-3E_4(z)x^4+2E_6(z)x^6)^{\frac{p-1}{2}} = 1-\frac{3(p-1)}{2}E_4(z)x^4+(p-1)E_6(z)x^6+O(x^8),
\end{equation*}
and Kaneko and Zagier \cite[Theorem 1]{K-Z} showed the same relation for two further families of modular forms $G_{p-1}$ and $F_{p-1}$, defined as the coefficient of $x^{p-1}$ in the expansion
\begin{equation*}
    (1-3E_4(z)x^4+2E_6(z)x^6)^{-\frac 12} = 1+\frac{3}{2}E_4(z)x^4-E_6(z)x^6+\frac{27}{8}E_4(z)^2x^8+O(x^{10}),
\end{equation*}
and the unique normalized solution in $M_{p-1}$ to a particular differential equation, respectively.
\begin{remark}
Along different lines, Atkin defined a measure on $\Q[j(z)]$ and a 
family of orthogonal polynomials whose reductions mod $p$ recover 
$S_p(x)$ \cite[Theorem 3]{K-Z}. Moreover, Ahlgren and Ono showed that if $\mathrm{wt}(Q)$ is the Weierstrass weight of $Q \in X_0(p)$ and $g_p$ is the genus of $X_0(p)$, 
then
\begin{equation*}
    \prod_{Q \in X_0(p)} (x-j(Q))^{\mathrm{wt}(Q)} \equiv S_p(x)^{g_p(g_p-1)} \pmod p,
\end{equation*}
hereby demonstrating that the arithmetic of Weierstrass points on the modular curve $X_0(p)$ offers another way of locating $S_p(x)$ \cite[Theorem 1]{Ahlgren}.
\end{remark}

Given these results, it is natural to ask whether there are other canonical families of modular forms whose reductions mod $p$ encode information about $S_p(x)$. For example, given that $M_k = \C E_k \oplus S_k$, one might ask whether any particular element of $S_k$ also has a divisor polynomial divisible by the supersingular $j$-invariants corresponding to some prime $p \geq 5$. For non-negative even integers $k \neq 2$, we consider the basis for $S_k$ consisting of simultaneous eigenforms of the family of Hecke operators $\{T(n): n \ge 2\}$ \cite[Definition 2.1]{OnoCBMS}. We can extract information about the supersingular locus from the Hecke trace forms $T_k(z)$, defined as the sum of the Hecke eigenforms in $S_k$ (each normalized to have leading coefficient 1), or equivalently,
\begin{equation}
T_k(z) := (\dim_\C S_k)\, q + \sum_{n=2}^\infty \Tr_k(n) q^n,
\end{equation}
where $\Tr_k(n)$ is the trace of $T(n)$ acting on $S_k$.

\begin{theorem}\label{thm: congruence}
    If $p \geq 5$ is prime, $k \geq 4$ is an even integer, and $$ n = \begin{cases}
    (p^2+p)\lfloor \frac{k}{p^3-p} \rfloor & k \not\equiv 0,2 \pmod{p^3-p},\\
    (p^2+p)\big(\lfloor \frac{k}{p^3-p} \rfloor - 1\big) & k \equiv 0,2 \pmod{p^3-p},
    \end{cases}$$ 
    then
    $\widetilde{S}_p(x)^n$ divides the divisor polynomial $F(T_k;x)$ mod $p$.
\end{theorem}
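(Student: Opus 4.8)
The plan is to prove the theorem in two stages, matching the two tools advertised in the abstract. First I would use the Eichler--Selberg trace formula to show that the trace forms are \emph{periodic modulo $p$ in the weight}, with period $p^3-p$; concretely, that $T_k \equiv T_{k_0} \pmod p$ as $q$-expansions whenever $k \equiv k_0 \pmod{p^3-p}$ and both weights are even and at least $4$. Second, I would convert this $q$-expansion congruence into the claimed divisibility of divisor polynomials by combining the near-multiplicativity of $F(\,\cdot\,;x)$ with Deligne's relation \eqref{eq: Deligne}. The bookkeeping is arranged so that, writing $k_0$ for the smallest weight $\ge 4$ with $k_0 \equiv k \pmod{p^3-p}$, the exponent in the statement is exactly $n = (k-k_0)/(p-1)$.

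For the first stage, recall that the weight-$k$ trace formula expresses $\Tr_k(n)$ as an elliptic term $-\tfrac12\sum_{t^2\le 4n} P_k(t,n)\,H(4n-t^2)$, a hyperbolic term $-\tfrac12\sum_{dd'=n}\min(d,d')^{k-1}$, and lower-order boundary terms, where $P_k(t,n)=\frac{\rho^{k-1}-\bar\rho^{k-1}}{\rho-\bar\rho}$ for $\rho,\bar\rho$ the roots of $X^2-tX+n$, and $H$ is the Hurwitz class number. I would examine the weight dependence of each term modulo $p$. When $p\nmid n$ the roots $\rho,\bar\rho$ lie in $\F_{p^2}^\times$, so $\rho^{k-1}$ is periodic in $k$ with period dividing $p^2-1$; likewise $\min(d,d')^{k-1}$ has period dividing $p-1$ by Fermat. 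The only genuinely new feature is the repeated-root contribution $t^2=4n$ (where $H(0)=-\tfrac1{12}$ enters), for which $P_k(t,n)=(k-1)\rho^{k-2}$ carries an explicit factor $k-1$, forcing the additional congruence $k\equiv k_0\pmod p$. Since $\gcd(p,p^2-1)=1$, the combined period is $\operatorname{lcm}(p,p^2-1)=p^3-p$, which is exactly where $p^3-p$ enters. After checking the terms with $p\mid n$ (which reduce to $0$ or to a pure power of $t$) and the leading coefficient $\Tr_k(1)=\dim S_k$, this yields $T_k\equiv T_{k_0}\pmod p$ for all even $k_0\ge 4$. The two even residues below $4$, namely $k_0\in\{0,2\}$, are precisely the degenerate weights where the formula does not compute a genuine cusp-form trace; there one reduces instead to $k_0+(p^3-p)$, which accounts for the two-case definition of $n$.

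For the second stage, since $p-1\mid p^3-p$ we have $k\equiv k_0\pmod{p-1}$, so $T_{k_0}E_{p-1}^{\,n}$ and $T_k$ are forms of the same weight $k$ with the same $q$-expansion modulo $p$ (using $E_{p-1}\equiv 1\pmod p$); as the divisor polynomial has integer coefficients depending $\Z$-linearly on those of the form, $F(T_k;x)\equiv F(T_{k_0}E_{p-1}^{\,n};x)\pmod p$. Using $j=E_4^3/\Delta$ and $j-1728=E_6^2/\Delta$ to reduce products into the canonical shape \eqref{eq: fdecomposition}, the divisor polynomial is multiplicative up to controlled factors: $F(fg;x)=x^a(x-1728)^b F(f;x)F(g;x)$ with $a,b\in\{0,1\}$. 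Iterating gives $F(T_k;x)\equiv x^{\ast}(x-1728)^{\ast} F(T_{k_0};x)\,F(E_{p-1};x)^{n}\pmod p$. Finally, dividing the identity $S_p=x^{\delta_p}(x-1728)^{\ep_p}\widetilde S_p$ into \eqref{eq: Deligne} gives $F(E_{p-1};x)\equiv \widetilde S_p(x)\pmod p$, so $\widetilde S_p(x)^{n}$ occurs as a factor on the right; since $\widetilde S_p$ is coprime to $x$ and to $x-1728$, it divides $F(T_k;x)$ modulo $p$, as claimed.

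I expect the main obstacle to be the first stage: the congruence $T_k\equiv T_{k_0}\pmod p$ must hold uniformly in $n$, which requires a careful case analysis of the trace formula --- in particular the repeated-root term (the source of the factor $p$, and hence of the full period $p^3-p$), the terms with $p\mid n$, and the small-weight boundary corrections --- rather than any single slick identity. By contrast, the divisor-polynomial manipulation in the second stage is essentially formal once Deligne's relation and the multiplicativity of $F(\,\cdot\,;x)$ are in hand, the only subtlety being to track, and then discard, the irrelevant powers of $x$ and $x-1728$.
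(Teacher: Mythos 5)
Your proposal follows the paper's own route almost exactly: your stage one is Theorem~\ref{thm: traceclassification}(i) (the congruence $T_k \equiv T_{k_0} \pmod{p}$ for $k \equiv k_0 \pmod{p^3-p}$, proved term-by-term in the Eichler--Selberg formula), and your stage two is Propositions~\ref{prop: table}, \ref{prop: epn}, and \ref{prop: divpolycong} assembled into Theorem~\ref{thm: preccong}. Your bookkeeping $n = (k-k_0)/(p-1)$, with reduction to $k_0 + (p^3-p)$ when $k \equiv 0,2 \pmod{p^3-p}$, matches the paper's choice $k' = k - n(p-1)$, and stage two is sound as sketched (one cosmetic point: once $F(T_k;x) \equiv x^{a}(x-1728)^{b}F(T_{k_0};x)\,\widetilde{S}_p(x)^n \pmod{p}$ is in hand, $\widetilde{S}_p(x)^n$ divides the right side outright, so the closing coprimality remark is unnecessary).

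Stage one, however, contains a genuine gap. You assert that for $p \nmid n$ the reductions of $\rho, \bar{\rho}$ are units in $\F_{p^2}$, hence $P_k(t,n)$ is $(p^2-1)$-periodic in $k$, and that ``the only genuinely new feature'' is the repeated-root term $t^2 = 4n$. This overlooks the terms with $t^2 < 4n$ but $p \mid 4n - t^2$, i.e.\ the case where $p$ ramifies in $K = \Q(\sqrt{t^2-4n})$ (the paper's Case 3). There the denominator $\rho - \bar{\rho} = \sqrt{t^2-4n}$ lies in the ramified prime $\mathfrak{p}$, so the periodicity of $\rho^{k-1}$ in the residue field does not transfer to the quotient $P_k(t,n)$: numerator and denominator both vanish mod $\mathfrak{p}$, and the correct evaluation is $P_k(t,n) \equiv (k-1)(t/2)^{k-2} \pmod{\mathfrak{p}}$, which carries the same factor $k-1$ as the $t^2 = 4n$ term and is $p(p-1)$-periodic but in general \emph{not} $(p^2-1)$-periodic. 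Concretely, take $p = 5$ and $(t,n) = (1,4)$, a term genuinely present in $\Tr_k(4)$ since $H(15) = 2 \neq 0$: then $P_k(1,4) \equiv (k-1)\cdot 3^{k-2} \pmod{5}$, so $P_4 \equiv 2$ but $P_{28} \equiv 3 \pmod{5}$, and your claimed $(p^2-1)$-periodicity fails. The theorem itself is unharmed, because $p(p-1) \mid p^3 - p$, so after adding this case the least common period is still $p(p^2-1) = p^3-p$; but your case analysis must include it --- together with the degenerate subcase $\rho \in \mathfrak{p}$ (which forces $p \mid t$ and $p \mid n$, whence $P_k \equiv 0 \pmod{p}$ for all even $k \geq 4$) --- exactly as the paper does. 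In other words, the factor $p$ in the period comes not only from $t^2 = 4n$ but from all discriminants $t^2 - 4n$ divisible by $p$.
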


This result will follow immediately from Theorem~\ref{thm: preccong}, which expresses $F(T_k;x)$ mod $p$ as a product of the factors $\widetilde{S}_p(x)$, $x$, $x-1728$, and the divisor polynomial of a lower weight trace form.
In certain special cases, we get the following simple factorization:
\begin{cor}\label{cor: sspmod}
    If $p \ge 5$ is a prime and $k \geq 4$ is an even integer such that $k \equiv 12,16,18,20,22,26 \pmod{p^3-p}$, then
    \begin{equation*}
        F(T_k;x) \equiv \widetilde{S}_p(x)^n x^{\alpha(p,k)} (x - 1728)^{\beta(p,k)} \pmod{p},
    \end{equation*}
    where $n = (p^2+p) \lfloor \frac{k}{p^3-p} \rfloor$, and $\alpha(p,k)$ and $\beta(p,k)$ are given in Corollary~\ref{cor: alphabeta}.
\end{cor}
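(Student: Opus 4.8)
The plan is to obtain Corollary~\ref{cor: sspmod} by iterating the recursive identity of Theorem~\ref{thm: preccong} and exploiting the special arithmetic of the six residues $12,16,18,20,22,26$ to force a trivial base case. The key observation is that these are exactly the even weights $k_0 \geq 4$ for which $\dim_\C S_{k_0} = 1$: writing $k_0 = 12m_0 + 4\delta_0 + 6\ep_0$ one checks that each has $m_0 = 1$, so $S_{k_0}$ is spanned by a single normalized Hecke eigenform, $T_{k_0}$ equals that eigenform, and since its constant term vanishes the decomposition \eqref{eq: fdecomposition} forces the divisor polynomial to be the constant $F(T_{k_0};x) = 1$. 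Because $p \geq 5$ gives $p^3-p \geq 120 > 26$, each such $k_0$ is the genuine least nonnegative residue of $k$ modulo $p^3-p$, so in particular $k \equiv k_0 \not\equiv 0,2 \pmod{p^3-p}$, which places us in the first case of Theorem~\ref{thm: congruence} with $n = (p^2+p)\lfloor k/(p^3-p)\rfloor$, matching the exponent claimed in the corollary.

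First I would set up the descent. Theorem~\ref{thm: preccong} expresses $F(T_k;x)\bmod p$ as $\widetilde{S}_p(x)^{p^2+p}\,x^{a}(x-1728)^{b}\,F(T_{k-(p^3-p)};x)$ for nonnegative integers $a,b$ determined by the $(\delta,\ep)$ data of the weight and by $\delta_p,\ep_p$. Every weight in the chain $k,\,k-(p^3-p),\,k-2(p^3-p),\dots,k_0$ is congruent to $k_0 \pmod{p^3-p}$, hence none is $\equiv 0,2$ and all lie at or above $k_0 \geq 12$, so the hypotheses of the recursion hold at each step. Iterating exactly $\lfloor k/(p^3-p)\rfloor$ times lowers the weight from $k$ to $k_0 = k - (p^3-p)\lfloor k/(p^3-p)\rfloor$ and multiplies together $\lfloor k/(p^3-p)\rfloor$ copies of $\widetilde{S}_p(x)^{p^2+p}$, producing the total exponent $n = (p^2+p)\lfloor k/(p^3-p)\rfloor$.

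Next I would read off the base case. After the final descent the recursion terminates at $F(T_{k_0};x) = 1$, so the whole product collapses to $\widetilde{S}_p(x)^n$ times the accumulated monomial in $x$ and $x-1728$. Collecting the exponents $a,b$ contributed at each of the $\lfloor k/(p^3-p)\rfloor$ steps defines $\alpha(p,k)$ and $\beta(p,k)$, and these are precisely the sums evaluated in Corollary~\ref{cor: alphabeta}; substituting them yields $F(T_k;x) \equiv \widetilde{S}_p(x)^n x^{\alpha(p,k)}(x-1728)^{\beta(p,k)} \pmod p$.

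The main obstacle will be the bookkeeping of $\alpha(p,k)$ and $\beta(p,k)$. The powers of $x$ and $x-1728$ enter from two sources at each descent step: the $x^{\delta_p}(x-1728)^{\ep_p}$ discrepancy between $S_p(x)$ and $\widetilde{S}_p(x)$ inherited from Deligne's relation \eqref{eq: Deligne} (which gives $F(E_{p-1};x)\equiv \widetilde{S}_p(x) \pmod p$), and the rewriting that occurs when the exponents in \eqref{eq: fdecomposition} exceed the allowed ranges, where excess factors are converted via $E_4^3 = j\Delta$ and $E_6^2 = (j-1728)\Delta$ into factors of $x$ and $x-1728$. Summing these contributions over the entire chain while correctly reducing the intermediate weights modulo $12$ (and modulo $3$ and $4$) is the delicate part, and it is exactly this computation that is isolated and carried out in Corollary~\ref{cor: alphabeta}. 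Once those closed forms for $\alpha$ and $\beta$ are in hand, the corollary follows immediately from the iterated recursion.
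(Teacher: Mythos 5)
Your proposal is correct and matches the paper's argument: the paper likewise reduces to the base weights $k_0 \in \{12,16,18,20,22,26\}$, where $\dim_\C S_{k_0}=1$ forces $F(T_{k_0};x)=1$ by normalization, and then invokes Theorem~\ref{thm: preccong} (this is exactly the proof of Corollary~\ref{cor: alphabeta}). The only cosmetic difference is that you iterate a one-step descent by $p^3-p$, whereas Theorem~\ref{thm: preccong} is already stated with $n=(p^2+p)\lfloor k/(p^3-p)\rfloor$ and performs the full descent in a single application, so no iteration (and none of the attendant exponent bookkeeping, which in any case collapses here since $n(p-1)\equiv 0 \pmod{12}$ gives $a=b=0$) is needed.
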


\begin{example*}
Let $p = 13$ and $k = 2196$. Note that $k = p^3 - p + 12$, hence $k$ satisfies the condition of Corollary~\ref{cor: sspmod}. We have that
\begin{align*}
    T_{2196}(z) &= 183q + 930885\dots406856q^2 + \dots \\
   &\equiv q + 2q^2 + 5q^3 + 10q^4 + 7q^5 + \dots \pmod{13}
\end{align*}
where the coefficient of $q^2$ is $\,\approx 9.3 \times 10^{328}$. The divisor polynomial of $T_{2196}(z)$ is then
\begin{equation*}
    F(T_{2196}; x) \equiv (x + 8)^{182} \equiv \widetilde{S}_{13}(x)^n \pmod{13},
\end{equation*}
where $n = (13^2 + 13)\lfloor \frac{2196}{13^3 - 13} \rfloor = 182$, and $\alpha(13,2196) = \beta(13,2196) = 0$.
\end{example*}

Furthermore, we define the modified trace forms
\begin{equation}\label{eq: That}
    \widehat{T}_k^{(p)}(z) := \sum_{\substack{n \geq 1 \\ (n,p)=1}} \Tr_k(n) q^n \in \F_p[[q]],
\end{equation}
where for the purpose of defining the divisor polynomial $F(\widehat{T}_k^{(p)};x)$, we consider $\widehat{T}_k^{(p)}(z)$ to be the mod $p$ reduction of a modular form of weight $k+p^2-1$ \cite[Proposition 2.44]{OnoCBMS}. We then obtain the following analogue to Theorem~\ref{thm: congruence}, which will again follow from a more complete description of the divisors of $F(\widehat{T}_k^{(p)};x)$, given in Theorem~\ref{thm: prechatcong}.
\begin{theorem}\label{thm: hatcongruence}
   If $p \geq 5$ is prime, $k \geq p^2-1$ is an even integer such that $k \equiv 0,4,6,8,10,14 \pmod{p^2-1}$, and $n = (p+1)(\lfloor \frac{k}{p^2-1} \rfloor-1)$, then $\widetilde{S}_p(x)^n$ divides the divisor polynomial of $\widehat{T}_k^{(p)}(z)$.
\end{theorem}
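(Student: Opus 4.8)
\emph{Overview of the approach.} The plan is to reduce $\widehat{T}_k^{(p)}$ modulo $p$ to an explicit series via the Eichler--Selberg trace formula, and then to convert the gap between the weight $k+p^2-1$ assigned to $\widehat{T}_k^{(p)}$ and its true filtration into powers of $\widetilde{S}_p(x)$ by means of Deligne's observation. I would begin by evaluating $\Tr_k(n)$ for $(n,p)=1$ with the trace formula. In the elliptic term $-\tfrac12\sum_{t^2\le 4n}P_k(t,n)H(4n-t^2)$, write $P_k(t,n)=(\rho^{k-1}-\overline\rho^{\,k-1})/(\rho-\overline\rho)$ for the roots $\rho,\overline\rho$ of $X^2-tX+n$; since $\rho\overline\rho=n$ is a unit, $\rho,\overline\rho\in\F_{p^2}^\times$ have order dividing $p^2-1$, so whenever $\rho\not\equiv\overline\rho\pmod p$ this contribution is periodic in $k$ with period $p^2-1$. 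The hyperbolic term $-\tfrac12\sum_{dd'=n}\min(d,d')^{k-1}$ is periodic with period $p-1\mid p^2-1$. Because $k\equiv k_0\pmod{p^2-1}$ with $k_0\in\{0,4,6,8,10,14\}$ --- exactly the even weights other than $2$ for which $S_{k_0}=0$ --- all of these periodic pieces agree mod $p$ with the corresponding vanishing pieces for $T_{k_0}$ and cancel. What survives comes only from the double roots $\rho\equiv\overline\rho\pmod p$ (including the boundary $t^2=4n$, where $H(0)=-\tfrac1{12}$), at which $P_k(t,n)\equiv(k-1)\rho^{k-2}\pmod p$; these depend on $k$ linearly through the factor $k-1$ and periodically mod $p-1$ through $\rho^{k-2}$. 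Subtracting the $k_0$ terms turns the linear factor into $k-k_0$, and I would obtain
\[
\widehat{T}_k^{(p)}(z)\;\equiv\;(k-k_0)\,\Psi \pmod p,
\]
where $\Psi\in\F_p[[q]]$ has a $q$-expansion depending on $k$ only modulo $p-1$, and $k-k_0=t(p^2-1)\equiv -t\pmod p$ with $t=\lfloor k/(p^2-1)\rfloor$. If $p\mid t$ the right-hand side vanishes and the claim is immediate, so I may assume $p\nmid t$.

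\emph{From $E_{p-1}$ to $\widetilde{S}_p$, and a descent.} The engine is Deligne's identity in the form $F(E_{p-1};x)\equiv\widetilde{S}_p(x)\pmod p$, together with the multiplicativity of $F$ up to the substitutions $E_4^3=x\Delta$ and $E_6^2=(x-1728)\Delta$: multiplying a mod $p$ form by $E_{p-1}$ (which is $\equiv 1$ in $q$-expansion) multiplies its divisor polynomial by $\widetilde{S}_p(x)$ up to factors of $x$ and $x-1728$, both of which are coprime to $\widetilde{S}_p$. Since $\Psi$ depends on $k$ only modulo $p-1$, it is literally the same $q$-series for weight $k$ and for weight $k-(p^2-1)$; hence $\widehat{T}_k^{(p)}$ and $E_{p-1}^{\,p+1}\,\widehat{T}_{k-(p^2-1)}^{(p)}$ are scalar multiples of one common weight-$(k+p^2-1)$ form, with ratio $(k-k_0)/(k-k_0-(p^2-1))\equiv t/(t-1)$. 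When this ratio is a nonzero unit, the two have the same divisor polynomial up to a scalar, so
\[
F(\widehat{T}_k^{(p)};x)\;\equiv\;\widetilde{S}_p(x)^{\,p+1}\,x^{a}(x-1728)^{b}\,F(\widehat{T}_{k-(p^2-1)}^{(p)};x)\pmod p .
\]
Iterating this descent $t-1$ times down to the base weight $k_0+(p^2-1)$, at which $\lfloor k/(p^2-1)\rfloor=1$ and no factor of $\widetilde{S}_p$ is claimed, accumulates exactly $\widetilde{S}_p(x)^{(p+1)(t-1)}=\widetilde{S}_p(x)^{n}$, as desired.

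\emph{The main obstacle.} The descent is valid only while all the intermediate ratios stay units mod $p$, that is, only for $t<p$; once $t\ge p$ some intermediate form $\widehat{T}_{k_0+s(p^2-1)}^{(p)}$ with $p\mid s$ vanishes mod $p$ and the chain of proportionalities breaks. To cover every $k$ uniformly one must instead bound the filtration of $\Psi$ directly and show $w(\Psi)\le k_0+2(p^2-1)$; writing $\Psi=E_{p-1}^{\,(k+p^2-1-w)/(p-1)}g$ for a form $g$ of weight $w=w(\Psi)$ then produces the exponent $(k+p^2-1-w)/(p-1)\ge (p+1)(t-1)=n$ with no unit hypothesis needed. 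Equivalently, since $\Psi\equiv(k-k_0)^{-1}\theta^{\,p-1}T_k$ where $\theta=q\,\tfrac{d}{dq}$ raises weight by $p+1$, this is a statement about the theta cycle $\{w(\theta^{i}T_k)\}$, in which $w(\theta f)=w(f)+p+1$ except at the low points where $p\mid w(f)$. Pinning down the locations of the drops in this cycle sharply enough to force $w(\Psi)\le k_0+2(p^2-1)$ is the crux of the argument; it is precisely the content of the finer Theorem~\ref{thm: prechatcong}, from which the present theorem then follows through the $E_{p-1}$ mechanism above.
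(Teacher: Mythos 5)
Your trace-formula analysis is essentially sound and in fact reproduces the paper's Lemma~\ref{lemma: Prelation} and Theorem~\ref{thm: hatclassification}: for $(n,p)=1$ the unramified and hyperbolic contributions are $(p^2-1)$-periodic in $k$, while the double-root terms contribute $(k-1)\rho^{k-2}\equiv (k_0-m-1)\,n^{(k_0-2)/2} \pmod p$, which is exactly what makes $\widehat{T}^{(p)}_{k_0+m(p^2-1)}\equiv m\cdot \widehat{T}^{(p)}_{k_0+(p^2-1)} \pmod p$. But there are two genuine gaps. First, your cancellation step --- ``all of these periodic pieces agree mod $p$ with the corresponding vanishing pieces for $T_{k_0}$'' --- fails at $k_0=0$: there is no trace form of weight $0$ to compare against, and by Lemma~\ref{lemma: Prelation} the unramified terms contribute $-n^{-1}$, not something matched by a vanishing trace. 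Closing this case requires the Kronecker--Hurwitz relation \eqref{eq: kroneckerhurwitz}, which the paper uses to show that $-n^{-1}\sum_t H(4n-t^2)+\sum_{dd'=n}\min(d,d')^{p^2-2}$ vanishes term by term mod $p$; your sketch silently treats all six residues as if $S_{k_0}=0$ with $k_0\ge 4$ did the work.

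Second, and more structurally, the ``main obstacle'' you identify is self-inflicted, and your proposed resolution is circular. Since you have already shown that $\Psi$ is independent of $m$ within the residue class, you know $\Psi\equiv -\widehat{T}^{(p)}_{k_0+p^2-1} \pmod p$, which is by construction the reduction of an honest modular form of weight $k_0+2(p^2-1)$ (apply $\Theta^{p-1}$ to $T_{k_0+p^2-1}$, each $\Theta$ raising the weight by $p+1$, as in \cite[Proposition 2.44]{OnoCBMS}); the filtration bound $w(\Psi)\le k_0+2(p^2-1)$ that you call ``the crux'' is therefore automatic, and no theta-cycle analysis of low points is needed. More to the point, there is no reason to descend one weight step at a time through intermediate forms whose ratios $s/(s-1)$ can die mod $p$ when $t\ge p$ (here $t=\lfloor k/(p^2-1)\rfloor$ in your notation): having disposed of $p\mid t$ trivially, as you do, compare weight $k$ \emph{directly} with the base weight in a single application of Proposition~\ref{prop: divpolycong}, taking the two lifts of weights $k_0+2(p^2-1)$ and $k+p^2-1$, which differ by $(t-1)(p^2-1)=n(p-1)$ with $n=(p+1)(t-1)$, and scalar $t\in\F_p^{\times}$. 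This one-step comparison is precisely the paper's proof of Theorem~\ref{thm: prechatcong}. As written, your argument instead ends by deferring its crux to Theorem~\ref{thm: prechatcong} itself --- a finer version of the very statement to be proved --- so the proposal is incomplete, even though every ingredient for the short correct finish is already on your page.
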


\begin{example*}
    Let $p = 19$ and $k = 724$. Note that $k = 4 + 2(p^2 - 1)$, hence $k$ satisfies the conditions of Theorem~\ref{thm: hatcongruence}. We have that
    \begin{equation*}
        \widehat{T}^{(19)}_{724}(z) = 3q + 12q^4 + 10q^5 + 14q^7 + 8q^9 + \dots.
    \end{equation*}
    The divisor polynomial of $\widehat{T}^{(19)}_{724}(z)$ is such that
    \begin{equation*}
        F(\widehat{T}^{(19)}_{724}; x) \equiv 3 \cdot x^6 (x + 1)^{24} (x + 12)^{48} (x^2 + 7x + 5) Q_1(x) Q_2(x) \pmod{19},
    \end{equation*}
    where
    \begin{align*}
        Q_1(x) &\equiv x^4 + 11x^3 + 6x^2 + 17x + 13 \pmod{19}, \\
        Q_2(x) &\equiv x^5 + 9x^4 + 3x^3 + x^2 + 7x + 4 \pmod{19}.
    \end{align*}
    We then see that $\widetilde{S}_{19}(x) \equiv (x + 12) \pmod{19}$ divides $F(\widehat{T}^{(19)}_{724}; x)$ with multiplicity greater than or equal to $(19+1)(\lfloor \frac{724}{19^2 - 1} \rfloor - 1) = 20$.
\end{example*}

Given modular forms $f$ and $g$ of even weights $k_1$ and $k_2$, such that $k_2 > k_1 \geq 4$, which are equivalent (and nonzero) mod $p$, the theory of modular forms mod $p$ implies that $k_2 - k_1 = n(p-1)$ for some $n \in \Z^+$ \cite[Proposition 2.43]{OnoCBMS}. Thus, $g$ and $E_{p-1}^n f$ have the same divisor polynomial mod $p$, where it follows from Deligne's observation that the divisor polynomial of $E_{p-1}^n f$  mod $p$ is a product of factors of $\widetilde{S}_p(x), x, (x-1728)$, and the divisor polynomial of $f$. To prove Theorems \ref{thm: congruence} and \ref{thm: hatcongruence}, we thus find equivalences between trace forms with different weights using the Eichler-Selberg trace formula.

The paper is organized as follows: In Section~\ref{sec: congruences}, we recall the Eichler-Selberg trace formula and the Kronecker-Hurwitz class number relation, and prove Theorems~\ref{thm: traceclassification} and \ref{thm: hatclassification}, which describe pairs of weights $k_2 > k_1 \geq 4$ whose trace forms are (up to a constant multiple) equivalent mod $p \ge 5$.
In Section~\ref{sec: modformsmodp}, we derive a formula (see Proposition~\ref{prop: divpolycong}) relating the divisor polynomials of modular forms with different weights which are equivalent mod $p$ using Deligne's observation that $F(E_{p-1};x) \equiv \widetilde{S}_p(x) \pmod{p}$.
In Section~\ref{sec: mainproofs}, we combine Theorems~\ref{thm: traceclassification} and \ref{thm: hatclassification} with Proposition~\ref{prop: divpolycong} to prove Theorems~\ref{thm: preccong} and \ref{thm: prechatcong} describing the factorizations of the divisor polynomials of certain trace forms and modified trace forms, from which we immediately obtain Theorems~\ref{thm: congruence} and \ref{thm: hatcongruence}.
We conclude with some illustrative numerical examples of our main theorems in Section~\ref{sec: numerics}.

\section*{Acknowledgements}
We would like to thank Ken Ono for suggesting and advising this project, and for many helpful conversations and suggestions. We thank W. Craig and B. Pandey for their valuable comments. Finally, we are grateful for the generous support of the National Science Foundation (DMS 2002265 and DMS 205118), the National Security Agency (H98230-21-1-0059), the Thomas Jefferson Fund at the University of Virginia, and the Templeton World Charity Foundation.

\section{Congruences for Hecke Trace Forms}\label{sec: congruences}
The main results of this section, which will be used to prove Theorems~\ref{thm: congruence} and \ref{thm: hatcongruence} in Section~\ref{sec: mainproofs}, are the following two theorems regarding congruences between Hecke trace forms of different weights mod $p$:

\begin{theorem}\label{thm: traceclassification}
    If $p \geq 5$ is prime and $k_1,k_2$ are even integers such that $k_2 > k_1 \geq 4$, then
    \begin{equation}\label{eq: Tcongruence}
    T_{k_2}(z) \equiv m \cdot T_{k_1}(z) \pmod{p}
    \end{equation} for some $m \in \F_p^\times$ whenever $k_1$ and $k_2$ are related by one of the following conditions:
    \begin{enumerate}[(i)]
    \item $k_2 = k_1 + cp(p^2 - 1)$ for $c \in \Z^+$ (in which case $m=1$), 
    \item $k_2 = k_1 + c(p^2 - 1)$ for some $c\in\Z^+$ such that $c + 1 \not\equiv 0 \pmod p$, where $p \leq 11$ and $k_1 = (p^2 - 1) + 0,4,6,8,10,14$ (in which case $m=c+1$), or
    \item $k_2 = k_1 + c(p - 1)$ for $c \in \Z^+$, where  $p \leq 11$
    and $\dim S_{k_1} = \dim S_{k_2}$ (in which case $m=1$).
\end{enumerate}
\end{theorem}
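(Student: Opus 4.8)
The plan is to evaluate both sides of \eqref{eq: Tcongruence} coefficient by coefficient by means of the Eichler--Selberg trace formula, which I will use in the shape
\[
\Tr_k(n)=-\tfrac12\sum_{t^2\le 4n}P_k(t,n)\,H(4n-t^2)-\tfrac12\sum_{dd'=n}\min(d,d')^{k-1},
\]
where $H$ is the Hurwitz class number (with the usual convention $H(0)=-\tfrac1{12}$) and $P_k(t,n)=\dfrac{\rho^{\,k-1}-\overline{\rho}^{\,k-1}}{\rho-\overline{\rho}}$ for $\rho,\overline{\rho}$ the roots of $X^2-tX+n$. Since $\Tr_k(1)=\dim S_k$ is the leading coefficient of $T_k$, it suffices to prove $\Tr_{k_2}(n)\equiv m\,\Tr_{k_1}(n)\pmod p$ for every $n\ge 1$. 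The entire dependence on $k$ sits in the powers $\min(d,d')^{k-1}$ and in $P_k(t,n)$, so the first step is to reduce these modulo $p$ and to sort the terms by the reductions of $n$ and of the discriminant $t^2-4n$: when $p\mid n$ a root degenerates to $0$ and the term either vanishes or reduces to $t^{\,k-2}$ (period $p-1$ in $k$); when $p\nmid n$ and $t^2-4n$ is a nonzero square the roots lie in $\F_p^\times$ (period $p-1$); when $p\nmid n$ and $t^2-4n$ is a non-square the roots are Frobenius-conjugate in $\F_{p^2}^\times$, i.e. $\overline{\rho}=\rho^{\,p}$ (period $p^2-1$); and when $p\mid t^2-4n$ but $p\nmid n$ the roots collide and $P_k(t,n)\equiv(k-1)\rho^{\,k-2}$ with $\rho=t/2\in\F_p^\times$ (period $\operatorname{lcm}(p,p-1)=p(p-1)$). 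Likewise $\min(d,d')^{k-1}$ has period $p-1$. Consequently each term of the trace formula is, modulo $p$, a periodic function of $k$ whose period divides $\operatorname{lcm}(p-1,\,p(p-1),\,p^2-1)=p^3-p$, and hence $T_k\bmod p$ depends only on $k\bmod(p^3-p)$.

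This periodicity proves (i) at once: here $k_2-k_1=cp(p^2-1)=c(p^3-p)$, so $k_2\equiv k_1\pmod{p^3-p}$ and therefore $T_{k_2}\equiv T_{k_1}\pmod p$, giving $m=1$.

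For (ii) the gap $k_2-k_1=c(p^2-1)$ is divisible by $p^2-1$ but not, in general, by $p$; indeed $c(p^2-1)\equiv -c\pmod p$. Comparing periods, the $p\mid n$, distinct-$\F_p$, conjugate-$\F_{p^2}$, and hyperbolic contributions are all invariant under $k_1\mapsto k_2$, and the only change occurs in the collision terms ($p\mid t^2-4n$, $p\nmid n$), whose prefactor $(k-1)$ is shifted by $-c$ while $\rho^{\,k-2}$ is unchanged. This produces an affine relation $T_{k_2}\equiv T_{k_1}+c\,\mathcal E\pmod p$, where $\mathcal E$ is the generating series of the collision contributions. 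It therefore suffices to settle the base case $c=1$, namely $T_{k_1+(p^2-1)}\equiv 2\,T_{k_1}\pmod p$ (equivalently $\mathcal E\equiv T_{k_1}$); the general statement $m=c+1$ then follows by substitution, and the hypothesis $c+1\not\equiv 0\pmod p$ is exactly what keeps $m\in\F_p^\times$. Because $k_1\equiv k_2\pmod{p-1}$, multiplying $T_{k_1}$ by $E_{p-1}^{\,p+1}\equiv 1\pmod p$ matches weights, so the base case is a finite comparison of Fourier coefficients up to the Sturm bound; the Kronecker--Hurwitz relation is used to evaluate in closed form the class-number sums defining $\mathcal E$. This reduces (ii) to checking the base case over the finitely many pairs $(p,k_1)$ with $p\le 11$ and $k_1-(p^2-1)\in\{0,4,6,8,10,14\}$.

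For (iii) the gap $k_2-k_1=c(p-1)$ is divisible only by $p-1$, so now both the collision terms and the conjugate-$\F_{p^2}$ terms can vary with $k$. Using $\overline{\rho}=\rho^{\,p}$ one finds that, under $k\mapsto k+(p-1)$, the conjugate terms transform through $\rho^{\,k-1}-\overline{\rho}^{\,k-1}\mapsto n\,(\rho^{\,k-3}-\overline{\rho}^{\,k-3})$, i.e. into a multiple of $P_{k-2}(t,n)$, while each collision term contributes an extra $\tfrac12\rho^{\,k-2}H(4n-t^2)$. The crux, and the step I expect to be the main obstacle, is to show that these two discrepancies cancel modulo $p$ precisely when $\dim S_{k_1}=\dim S_{k_2}$; this cancellation is invisible term by term, and I would establish it by summing over all admissible $t$ and applying the Kronecker--Hurwitz relation to collapse the resulting class-number sum into a divisor sum, the dimension hypothesis being what forces the surviving terms to vanish. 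Since $T_k\bmod p$ depends only on $k\bmod(p^3-p)$, for each $p\le 11$ there are only finitely many residue classes to consider, so---much as in (ii)---the argument reduces to a bounded computation verifying $T_{k_2}\equiv T_{k_1}\pmod p$ on these representatives; one must only take care that $\dim S_k$ is not monotonic in $k$ (it dips at $k\equiv 2\pmod{12}$), so the reduction to representatives is organized through the periodicity mod $p^3-p$ rather than through a naive induction on $c$.
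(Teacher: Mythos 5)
Your proposal is correct, and for parts (i) and (iii) it is essentially the paper's own argument, while for part (ii) it takes a genuinely more structural route. Part (i) is proved in the paper exactly as you propose: via the Eichler--Selberg formula \eqref{eq: traceformula} one shows $P_k(t,n) \bmod p$ is periodic in $k$ with period $p-1$, $p^2-1$, or $p(p-1)$ according to whether $p$ splits, is inert, or ramifies in $\Q(\sqrt{t^2-4n})$; the paper works with primes of $\mathcal{O}_K$ where you work directly in $\F_p$ and $\F_{p^2}$ with $\bar{\rho}=\rho^p$, and your separate handling of the $p \mid n$ terms is if anything cleaner than the paper's Case 2. For (ii) and (iii) the paper gives no structural argument at all: it observes that such pairs exist only for $p \in \{5,7,11\}$, uses the periodicity from (i) to reduce to pairs $4 \le k_1 < k_2 < p(p^2-1)+4$, and verifies the congruences up to the Sturm bound $\lfloor k_2/12 \rfloor$ by SageMath. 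Your affine relation $T_{k_1+c(p^2-1)} \equiv T_{k_1} + c\,\mathcal{E} \pmod{p}$ is rigorous as you derive it --- only the terms with $p \mid t^2-4n$, $p \nmid n$ vary, through $P_{k+c(p^2-1)}(t,n) \equiv (k-c-1)(t/2)^{k-2} \pmod{p}$, while all other terms and the $\min(d,d')^{k-1}$ sum have period dividing $c(p^2-1)$ --- and it reduces (ii) to the single base case $c=1$, explaining $m=c+1$ structurally where the paper only observes it numerically over all $c$ in one period; this is precisely the mechanism the paper itself deploys for the modified trace forms in Lemma~\ref{lemma: Prelation} and Theorem~\ref{thm: hatclassification}, so your argument is an economical transposition of that computation back to $T_k$. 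Two caveats. First, your suggestion that Kronecker--Hurwitz evaluates $\mathcal{E}$ ``in closed form'' overstates: the relation \eqref{eq: kroneckerhurwitz} is unweighted and sums over all $t$, whereas $\mathcal{E}$ carries the weights $(t/2)^{k-2}$ and the restriction $p \mid t^2-4n$; in the paper the analogous collapse succeeds only for $\widehat{T}_k^{(p)}$, where coefficients with $p \mid n$ are discarded, and the surviving $p \mid n$ coefficients are exactly why (ii) fails for $p \ge 13$. Since you anchor the base case in a finite Sturm-bound check anyway, this is harmless. Second, your hoped-for conceptual cancellation in (iii) is not carried out, so there your proof, like the paper's, ultimately rests on the bounded verification; that fallback is sound, and your care about non-monotonicity of $\dim S_k$ is well placed: since $\dim S_k = d$ exactly for $k \in \{12d, 12d+4, 12d+6, 12d+8, 12d+10, 12d+14\}$, any pair in (iii) has $c(p-1) \le 14$ and the hypothesis $\dim S_{k_1} = \dim S_{k_2}$ depends only on $k_1 \bmod 12$ and $c$; as $12 \mid p^3-p$, the representative pairs mod $p^3-p$ inherit the hypothesis, so the finite check does cover all cases.
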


\begin{remark}
For any prime $p \ge 5$, the theory of modular forms mod $p$ shows that congruences of the form \eqref{eq: Tcongruence} can only occur for pairs of even weights $k_2 > k_1 \geq 4$ that differ by a multiple of $p-1$ \cite[Proposition 2.43]{OnoCBMS}. Apart from this, it is natural to ask whether the conditions for such congruences given in Theorem \ref{thm: traceclassification} are comprehensive. This is the case for $p \in \{5,7,11\}$, and a general search of trace forms of weight up to 300 did not reveal any congruences except those predicted by Theorem \ref{thm: traceclassification} for $p \ge 13$.
\end{remark}

\begin{theorem}\label{thm: hatclassification}
    If $p \geq 5$ is prime, $k\in \{0,4,6,8,10,14\}$, and $m \in \Z_{\ge 1}$, then
    \begin{equation}\label{pminusonethetas}
    \widehat{T}_{k+m(p^2-1)}^{(p)}(z) = m \cdot \widehat{T}_{k+p^2-1}^{(p)}(z).
    \end{equation}
\end{theorem}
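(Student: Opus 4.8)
The plan is to pass from the asserted identity of power series in $\F_p[[q]]$ to the coefficientwise congruence
\[
\Tr_{k+m(p^2-1)}(n) \equiv m\,\Tr_{k+(p^2-1)}(n) \pmod p \qquad \text{for every } n \text{ with } (n,p)=1,
\]
and to prove this by extracting the exact dependence on the weight $k$ from the Eichler--Selberg trace formula. In its usual form this reads
\[
\Tr_k(n) = -\tfrac12\sum_{t^2\le 4n} P_k(t,n)\,H(4n-t^2)\;-\;\tfrac12\sum_{dd'=n}\min(d,d')^{k-1},
\]
where $H$ is the Hurwitz class number with $H(0)=-\tfrac1{12}$, and $P_k(t,n)$ is the coefficient of $x^{k-2}$ in $(1-tx+nx^2)^{-1}$. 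The weight enters only through $P_k(t,n)$ and through $\min(d,d')^{k-1}$, the class numbers being weight-free; and the hypothesis $(n,p)=1$, built into the definition of $\widehat{T}_k^{(p)}$, is exactly what guarantees that the relevant quantities do not vanish modulo $p$ (the characteristic roots $\rho,\bar\rho$ of $X^2-tX+n$ satisfy $\rho\bar\rho=n\not\equiv 0$, and each $\min(d,d')$ divides $n$). This is why the argument will go through uniformly for all $p\ge 5$.

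First I would reduce the two weight-dependent families modulo $p$. If $t^2-4n\not\equiv 0 \pmod p$, then $\rho,\bar\rho$ are distinct in $\F_{p^2}^\times$, so $P_k(t,n)\equiv(\rho^{k-1}-\bar\rho^{k-1})/(\rho-\bar\rho)$ is periodic in $k$ with period dividing $p^2-1$; likewise $\min(d,d')^{k-1}$ is periodic with period dividing $p-1$. If instead $t^2\equiv 4n\pmod p$, the roots collapse to $\rho\equiv t/2$, and the expansion $P_k(t,n)=\sum_{j=0}^{k-2}\rho^{j}\bar\rho^{\,k-2-j}$ reduces to $(k-1)\rho^{k-2}\pmod p$, which is an explicit linear factor $k-1$ times a period-$(p^2-1)$ quantity. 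Since the split of the index set into coincident and distinct roots depends on $n$ and $p$ but not on $k$, summing over $t$ and over divisors yields a clean decomposition
\[
\Tr_k(n)\equiv A(n;k)+(k-1)\,B(n;k)\pmod p,
\]
with $A$ and $B$ genuinely periodic in $k$ of period $p^2-1$.

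With this decomposition the computation is immediate. Because $p^2-1\equiv -1\pmod p$, periodicity of $A$ and $B$ gives $\Tr_{k+m(p^2-1)}(n)\equiv A(n;k)+(k-1-m)B(n;k)$ and $\Tr_{k+(p^2-1)}(n)\equiv A(n;k)+(k-2)B(n;k)$, so subtracting $m$ times the latter from the former collapses to
\[
\Tr_{k+m(p^2-1)}(n)-m\,\Tr_{k+(p^2-1)}(n)\equiv(1-m)\bigl(A(n;k)+(k-1)B(n;k)\bigr)\equiv(1-m)\,\Tr_k(n)\pmod p.
\]
Thus the theorem is equivalent to $\Tr_k(n)\equiv 0\pmod p$ for each of the six residues. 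For $k\in\{4,6,8,10,14\}$ this is immediate, since $\dim_\C S_k=0$ makes $\Tr_k(n)$ the trace of an operator on the zero space.

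The one case requiring real input is $k=0$, where $S_0=0$ but the formula is not literally valid and must be read through its periodic extension. Evaluating at $k=0$ gives $P_0(t,n)\equiv -1/n\pmod p$ in both the distinct- and coincident-root cases, while $\min(d,d')^{-1}\equiv\max(d,d')/n$; the two resulting sums are then matched by the Kronecker--Hurwitz relation $\sum_{t^2\le 4n}H(4n-t^2)=\sum_{d\mid n}\max(d,n/d)$, forcing $\Tr_0(n)\equiv 0\pmod p$. I expect the main obstacle to lie in the bookkeeping of the second step --- verifying that every weight-dependent contribution is exactly of the form $A+(k-1)B$ with $A,B$ truly $(p^2-1)$-periodic, since it is precisely the coincident-root terms that manufacture the multiplier $m$ --- together with the separate, Kronecker--Hurwitz-based treatment of the boundary weight $k=0$.
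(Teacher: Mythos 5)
Your proposal is correct and follows essentially the same route as the paper: the Eichler--Selberg trace formula, the observation that $P_k(t,n)$ is $(p^2-1)$-periodic in $k$ mod $p$ when the roots $\rho,\bar\rho$ are distinct mod $p$ but acquires the linear factor $(k-1)$ (hence $(k-m-1)$ after shifting) when $t^2 \equiv 4n \pmod p$, the collapse of the difference to $(1-m)\,\Tr_k(n)$, vanishing for $k\in\{4,6,8,10,14\}$ since $S_k=0$, and the Kronecker--Hurwitz relation with the term-wise cancellation $-n^{-1}\max(d,d') + \min(d,d')^{-1} \equiv 0$ for $k=0$. Your packaging via the decomposition $\Tr_k(n)\equiv A+(k-1)B$ with $A,B$ periodic (and roots taken directly in $\F_{p^2}$ rather than the paper's inert/split/ramified case analysis in $\mathcal{O}_K$) is a mild streamlining of the paper's Lemma~\ref{lemma: Prelation}, not a different argument.
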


\begin{remark}\label{rem: theta}
Note that given any even integer $k \geq 4$, we have that $\widehat{T}_k(z)$ is the $q$-expansion obtained by applying $\Theta^{p-1}$ to $T_k(z)$ and reducing each coefficient mod $p$, where $\Theta = q\,\frac{d}{dq} = \frac{1}{2\pi i}\,\frac{d}{dz}$. Strictly speaking, we obtain a relation as in \eqref{pminusonethetas} by applying $\Theta$ to $T_{k+(p^2-1)}(z)$ and $T_{k+m(p^2-1)}(z)$ only once and reducing mod $p$. The effect of applying this operator $p-1$ times instead
is that the resulting $q$-expansion is identical to the original one, except that all coefficients whose order is divisible by $p$ are annihilated. 
\end{remark}

In Section~\ref{subsec: trace}, we recall the Eichler-Selberg trace formula, which we use to prove Theorem~\ref{thm: traceclassification}. The proofs of Theorems~\ref{thm: traceclassification} and \ref{thm: hatclassification} then follow in Sections~\ref{subsec: proofoftrace} and \ref{subsec: proofofhat}.

\subsection{The Eichler-Selberg Trace Formula}\label{subsec: trace}

Following Zagier's appendix in \cite{ZagierTraceFormula}, we define the Hurwitz class numbers $H(n)$ for $n \in \Z$ as follows: let $H(n) = 0$ whenever $n < 0$, and $H(0) = -\frac{1}{12}$. For $n > 0$, define $H(n)$ to be the number of equivalence classes with respect to $\SL_2(\Z)$ of positive definite binary quadratic forms $$f(x,y) = ax^2+bxy+cy^2, \quad a,b,c \in \Z,$$ with discriminant $\Delta(f) := b^2 - 4ac = -n$, where we count forms equivalent to a scalar multiple of $x^2+y^2$ with multiplicity $\frac 12$, and forms equivalent to a scalar multiple of $x^2+xy+y^2$ with multiplicity $\frac 13$.

Moreover, for any even integer $k > 0$, we define the polynomial $P_k(t,n)$ to be the coefficient of $x^{k-2}$ in the series expansion of $(1-tx+nx^2)^{-1}.$
Then we have that 
\begin{equation}\label{eq: P}
P_k(t,n) = \frac{\rho^{k-1}-\bar{\rho}^{k-1}}{\rho - \bar{\rho}}\,,
\end{equation}
where $\rho, \bar{\rho}$ are defined by the relations $\rho +\bar{\rho} = t$ and $\rho\bar{\rho} = n$. With these definitions, the Eichler-Selberg Trace Formula can be stated as follows:

\begin{TraceFormula}
If $k \geq 4$ is an even integer and $n \in \Z^+$, then the trace of the Hecke operator $T(n)$ on the space of cusp forms $S_k$ is given by 
\begin{equation}\label{eq: traceformula}
    \Tr T(n) = -\frac 12 \sum_{t= -\infty}^\infty P_k(t,n)H(4n-t^2) - \frac 12 \sum_{dd' = n}\min(d,d')^{k-1},
\end{equation}
where the second sum is taken over all factorizations $dd' = n$  with  $d,d' \geq 1$.
\end{TraceFormula}
\noindent Note that the first sum is in fact finite since $H(4n-t^2)$ is non-zero only when $\abs{t} \leq 2\sqrt{n}$.

\subsection{Proof of Theorem~\ref{thm: traceclassification}}\label{subsec: proofoftrace}
    To prove (i), we use the Eichler-Selberg trace formula to show that $\Tr_k(n) \equiv \Tr_{k+p(p^2-1)}(n) \pmod p$ for each $n \ge 1$.
    Using Fermat's little theorem, we see that $\min(d,d')^{(k+p(p^2-1))-1} \equiv \min(d,d')^{k-1} \pmod p$,
    hence we need only to consider the first term in \eqref{eq: traceformula}. In particular, it will suffice to prove that $$P_k(t,n) \equiv P_{k+p(p^2-1)}(t,n) \pmod p$$ for each $n \geq 1$ and $t \in \Z$ such that $\abs{t} \leq 2\sqrt{n}$.
    
    If $t^2 = 4n$, we have that $P_k(t,n)$ is the $x^{k-2}$ coefficient of
    $$(1-tx + nx^2)^{-1} = \Big(1 - \frac{tx}{2}\Big)^{-2} = \left(\sum_{i=0}^\infty \Big(\frac{tx}{2}\Big)^i \right)^2 = \sum_{j=0}^\infty \,\frac{j+1}{2^j}\, t^j x^j,$$
    i.e., $P_k(t,n) = (k-1)2^{2-k} t^{k-2}$. The desired congruence $P_k(t, n) \equiv P_{k+p(p^2-1)}(t, n) \pmod p$ then follows by Fermat's little theorem.
    
    From now on, we can assume that $t^2 - 4n < 0$. In particular, using the expression for $P_k(t,n)$ given in \eqref{eq: P}, we see that $\rho, \bar{\rho}$ lie in the imaginary quadratic field $K = \Q(\sqrt{t^2-4n}).$ We now split into cases based on how $p$ splits in $\mathcal{O}_K$. We will see that for fixed $t,n$, the values $P_k(t,n)$ mod $p$ are periodic in $k$ with periods $p^2-1$, $p-1$, or $p(p-1)$ depending on the case.

    \textbf{Case 1:} If $p$ is inert in $\mathcal{O}_K$, we have $\mathcal{O}_K/(p) \cong \F_{p^2}$, so in particular either $\rho \in (p)$ or $\rho^{p^2-1} \equiv 1 \in \mathcal{O}_K/(p)$. If $\rho \in (p)$, i.e., $v_p(\rho) \ge 1$, then the same is true of $\bar{\rho}$, and
    $$P_{a+1}(t,n) = \frac{\rho^a - \bar{\rho}^a}{\rho - \bar{\rho}} = \rho^{a-1} + \rho^{a-2} \bar{\rho} + \cdots + \bar{\rho}^{a-1}$$ 
    is of valuation $\ge a-1$, which means that $P_{k_1}(t,n) \equiv P_{k_2}(t,n) \equiv 0 \pmod p$ for any even $k_1,k_2 \ge 4$. If $\rho \notin (p)$, then the same is true of $\bar{\rho}$, so we have $\rho^{p^2-1}, \bar{\rho}^{p^2-1} \equiv 1 \in \mathcal{O}_K/(p)$. Hence in particular, for any $c \in \Z_{\geq 0}$ we have that
    $$\rho^{k+c(p^2-1)-1}-\bar{\rho}^{k+c(p^2-1)-1} \equiv \rho^{k-1} - \bar{\rho}^{k-1} \in \mathcal{O}_K/(p).$$
    Note that $\rho - \bar{\rho} = \sqrt{t^2-4n} \notin (p)$ by assumption that $p$ is not ramified. Therefore $p \nmid t^2 - 4n$, and so we can divide the above equation by $\rho - \bar{\rho}$ to get that $P_{k+c(p^2-1)}(t,n) \equiv P_k(t,n) \in \F_p$. Thus, for fixed $t,n$ so that $p$ is inert in $\Q(\sqrt{t^2-4n})$, we see that the values $P_k(t,n)$ mod $p$ are in fact $(p^2-1)$-periodic in $k$.
    
    \textbf{Case 2:} If $p$ splits in $\mathcal{O}_K$ as $(p) = \mathfrak{p}_1 \mathfrak{p}_2$ with $\mathfrak{p}_1\neq \mathfrak{p}_2$, we have $\mathcal{O}_K/(p) \cong \F_p \times \F_p$, so the $i$th coordinate of $\rho^{p-1} \in \F_p \times \F_p$ is $1$ if $\rho \notin \mathfrak{p}_i$, and 0 otherwise. If $\rho \in \mathfrak{p}_i$ for some $i$, then expanding $P_{a+1}(t,n)$ as above, we find that $P_{a+1}(t,n) \equiv \bar{\rho}^{a-1} \in \mathcal{O}_K/\mathfrak{p}_i$,
    and therefore $P_k(t,n) \equiv P_{k+c(p-1)}(t,n) \equiv 0 \pmod p$ for any even $k \ge 4$ and any $c \in \Z_{\ge 0}$.
    Otherwise, if $\rho, \bar{\rho} \notin \mathfrak{p}_1, \mathfrak{p}_2$, we have $\rho^{p-1}, \bar{\rho}^{p-1} \equiv 1 \pmod p$, which implies that $P_{k+c(p-1)}(t,n) \equiv P_k(t,n) \pmod p$ for any $c \in \Z_{\geq 0}$ by the same argument as above: we have $$\rho^{k+ c(p-1) - 1} - \bar{\rho}^{k + c(p-1) - 1} \equiv \rho^{k-1} - \bar{\rho}^{k-1} \pmod{\mathfrak{p}_i}$$
    for each $i$, and as before, since $t^2 - 4n \notin (p)$ (as $p$ does not ramify), $\rho - \bar{\rho} = \sqrt{t^2-4n} \notin \mathfrak{p}_i$. Dividing by $\rho - \bar{\rho}$, we get $P_{k+c(p-1)}(t,n) \equiv P_k(t,n) \pmod{\mathfrak{p}_i}$ for each $i$, and so also mod $p$. Therefore, for fixed $t,n$ so that $p$ splits in $\Q(\sqrt{t^2-4n})$, we see that the values $P_k(t,n)$ mod $p$ are $(p-1)$-periodic in $k$.
    
    \textbf{Case 3:} Finally, suppose that $p$ ramifies in $\mathcal{O}_K$, i.e., that $p = \mathfrak{p}^2$ for some prime $\mathfrak{p}$.
    If $v_{ \mathfrak{p}}(\rho) \ge 1$, then $v_{\mathfrak{p}}(\bar{\rho}) \ge 1$ as well, and so
    by the same argument as in Case 1, we find that $P_{k_1}(t,n) \equiv P_{k_2}(t,n) \equiv 0 \pmod p$ for all even integers $k_1,k_2 \ge 4$. Otherwise, we have that $\rho^{p-1}, \bar{\rho}^{p-1} \equiv 1 \in \mathcal{O}_K/\mathfrak{p}$.
    We note that in this case, $\rho - \bar{\rho} = \sqrt{t^2-4n} \in \mathfrak{p}$, as $t^2-4n \in (p)$ by assumption that $p \ne 2$ ramifies, so
    we must add a factor of $p$ in
    \begin{equation}\label{eq: case3ram}
    P_k(t,n) \equiv (k-1)\rho^{k-2} \equiv (k+cp(p-1)-1)\rho^{k+cp(p-1)-2} \equiv P_{k+cp(p-1)}(t,n) \pmod{\mathfrak{p}}.
    \end{equation}
    Thus in this case, for fixed $t,n$ so that $p$ ramifies in $\Q(\sqrt{t^2-4n})$, the values $P_k(t,n)$ mod $p$ are $p(p-1)$-periodic in $k$.
    
    We verify (ii) and (iii) computationally. First, note that for any pair of weights $k_2 > k_1 \ge 4$ satisfying (ii) or (iii), we have that $k_2 = k_1 + c(p - 1)$ for some $c \in \Z^+$. Since $E_{p-1} \equiv 1 \pmod{p}$ for all primes $p \geq 5$, we may embed $T_{k_1}$ into $M_{k_2}$ modulo $p$ as $E_{p-1}^c T_{k_1}$ without changing the congruence mod $p$. To compare two forms of common weight $k_2$ mod $p$, it suffices to consider the first $\lfloor \frac{k_2}{12} \rfloor$ coefficients of their $q$-expansions \cite[Theorem 2.58]{OnoCBMS}. That is, we verify that $\Tr_{k_2}(n) \equiv m \cdot \Tr_{k_1}(n) \pmod{p}$ with the appropriate choice of $m \in \F_p$ for all $1 \leq n \leq \lfloor \frac{k_2}{12} \rfloor$.
    Each Hecke trace is computed using the Eichler-Selberg trace formula \eqref{eq: traceformula}. Due to the periodicity established in (i), it suffices to consider pairs of weights of the form $4 \le k_1 < k_2 < p(p^2 - 1) + 4$. Since pairs of weights satisfying (ii) or (iii) exist only for $p \in \{5,7,11\}$, the proof thus consists of a finite check. For SageMath code used to carry out the verification necessary to prove (ii) and (iii), see \cite{code}. \qed
    
\begin{example}
As an example of the calculations required to prove Theorem~\ref{thm: traceclassification}, we verify that $T_{24}(z) \equiv T_{28}(z) \pmod{5}$, as claimed in (iii). Since $\Tr_{24}(1) = \dim S_{24} = 2 = \dim S_{28} = \Tr_{28}(1)$, it suffices to verify that $\Tr_{24}(2) \equiv \Tr_{28}(2) \pmod{5}$. Since $28 \equiv 24 \pmod{p - 1}$, we have cancellation in the second terms of \eqref{eq: traceformula} by Fermat's little theorem, so we need only to verify that
    \begin{equation}\label{eq: caseiiiii}
        \sum_{t=-2}^{2} P_{24}(t,2)H(8 - t^2) \equiv \sum_{t=-2}^{2} P_{28}(t,2)H(8 - t^2) \pmod{5}.
    \end{equation}

We compute the following values for $P_{24}(t,2), P_{28}(t,2)$, and $H(8-t^2)$:
    \begin{table}[H]
    \centering
    \begin{tabu}{|c|c|c|c|}
        \hline
        $t$ & $P_{24}(t,2)$ & $P_{28}(t,2)$ & $H(8 - t^2)$ \\ \hline
        $0$ & $-2048$ & $-8192$ & $1$ \\ \hline
        $\pm 1$ & $967$ & $8279$ & $1$ \\ \hline
        $\pm 2$ & $-2048$ & $8192$ & $1/2$ \\ \hline
    \end{tabu}
    \vspace{2mm}
    \caption{Example Calculations of $P_k(t,n)$ and $H(4n - t^2)$}
    \vspace{-6mm}
    \end{table}

\noindent Using these numerical values, we easily see that \eqref{eq: caseiiiii} indeed holds.
\end{example}

\subsection{Proof of Theorem~\ref{thm: hatclassification}}\label{subsec: proofofhat}

The proof of Theorem~\ref{thm: hatclassification} relies on the following congruence relations for the polynomials $P_k(t,n)$ mod $p$:

\begin{lemma}\label{lemma: Prelation}
Consider some fixed $(t,n) \in \Z \times \Z^+$ so that $t^2 \le 4n$, and any prime $p \geq 5$ such that $\gcd(p,n) = 1$. Let $k \ne 2$ be a non-negative even integer. If $p$ is unramified in $\Q(\sqrt{t^2-4n})$ and $m \in \Z^+$, then
\begin{equation*}
P_{k+m(p^2-1)}(t,n) \equiv \begin{dcases}
-n^{-1} \,\,\pmod p & \text{if } k = 0, \\
P_k(t,n) \,\,\pmod p & \text{otherwise}.
\end{dcases}
\end{equation*}
If $p$ is ramified in $\Q(\sqrt{t^2-4n})$ or $t^2 - 4n = 0$, then 
\begin{equation*}
P_{k+m(p^2-1)}(t,n)\equiv (k-m-1)n^{\frac{k-2}{2}} \pmod p.
\end{equation*}
\end{lemma}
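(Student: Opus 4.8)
The plan is to compute $P_{k+m(p^2-1)}(t,n)$ directly from the closed form \eqref{eq: P}, namely $P_k(t,n) = (\rho^{k-1}-\bar\rho^{k-1})/(\rho-\bar\rho)$, working in $\mathcal O_K$ modulo the primes above $p$, exactly as in the proof of Theorem~\ref{thm: traceclassification} but now exploiting the extra hypothesis $\gcd(p,n)=1$. Since $\rho\bar\rho = n$ is prime to $p$, neither $\rho$ nor $\bar\rho$ lies in any prime of $\mathcal O_K$ above $p$; in particular both are units modulo $p$, so the subcases with $\rho\in(p)$ that produced $P_k\equiv 0$ in Theorem~\ref{thm: traceclassification} cannot occur here. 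Throughout I will use that $p^2-1\equiv -1\pmod p$, which is what ultimately converts the shift $m(p^2-1)$ in the weight into the factor involving $m$ in the statement. The whole argument splits according to whether $p\mid t^2-4n$, that is, whether the reduction of $X^2-tX+n$ modulo $p$ is separable; this is exactly the dichotomy between $p$ being unramified and $p$ being ramified (or $t^2-4n=0$) in $\Q(\sqrt{t^2-4n})$.

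In the unramified case, i.e. $p\nmid t^2-4n$, I first observe that $\rho-\bar\rho=\sqrt{t^2-4n}$ is a \emph{unit} modulo $p$, which is the step that makes division by $\rho-\bar\rho$ legitimate. Whether $p$ is inert or split, every unit $u$ of $\mathcal O_K/(p)$ satisfies $u^{p^2-1}\equiv 1$ (since $\F_{p^2}^\times$ has order $p^2-1$ in the inert case, while in the split case each unit already has order dividing $p-1\mid p^2-1$); hence $\rho^{p^2-1}\equiv\bar\rho^{p^2-1}\equiv 1$, and so $\rho^{m(p^2-1)}\equiv\bar\rho^{m(p^2-1)}\equiv 1\pmod{(p)}$ for every $m\ge 1$. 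For $k\ge 4$ this gives $\rho^{k+m(p^2-1)-1}\equiv\rho^{k-1}$ and likewise for $\bar\rho$; dividing the difference by the unit $\rho-\bar\rho$ yields $P_{k+m(p^2-1)}(t,n)\equiv P_k(t,n)\pmod p$. For $k=0$ the exponent $k-1=-1$ appears, so I instead reduce $\rho^{m(p^2-1)-1}\equiv\rho^{-1}$ and $\bar\rho^{m(p^2-1)-1}\equiv\bar\rho^{-1}$, whence
\[
P_{m(p^2-1)}(t,n)\equiv\frac{\rho^{-1}-\bar\rho^{-1}}{\rho-\bar\rho}=\frac{-(\rho-\bar\rho)}{\rho\bar\rho\,(\rho-\bar\rho)}=-\frac1n\pmod p,
\]
which is the asserted value $-n^{-1}$. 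Both congruences are between $p$-integral rationals, so they descend from $\mathcal O_K/(p)$ to $\F_p$.

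In the remaining case $p\mid t^2-4n$ (which includes $t^2-4n=0$), the polynomial $X^2-tX+n$ has a repeated root modulo the prime $\mathfrak p$ above $p$, namely $\rho\equiv\bar\rho\equiv t\cdot 2^{-1}\in\F_p$, and this root is a unit since $\rho\bar\rho=n$ is prime to $p$. Expanding the geometric sum $P_k(t,n)=\sum_{i=0}^{k-2}\rho^i\bar\rho^{k-2-i}$ and reducing modulo $\mathfrak p$ collapses all $k-1$ terms to $\rho^{k-2}$, giving $P_k(t,n)\equiv(k-1)\rho^{k-2}$. Applying the same identity to weight $k+m(p^2-1)$ and using $\rho^{p^2-1}\equiv 1$ together with $k+m(p^2-1)-1\equiv k-m-1\pmod p$, I obtain $P_{k+m(p^2-1)}(t,n)\equiv(k-m-1)\rho^{k-2}$. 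Finally $\rho^2\equiv\rho\bar\rho=n$ turns $\rho^{k-2}$ into $n^{(k-2)/2}$ (with the convention $n^{-1}$ when $k=0$), proving the second formula; the identity $\rho^2=n$ also holds exactly when $t^2-4n=0$, so that degenerate subcase is covered verbatim.

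I expect the main obstacle to be bookkeeping rather than conceptual. The one genuinely delicate point is justifying division by $\rho-\bar\rho$, which is valid exactly under the reading of \emph{unramified} as $p\nmid t^2-4n$ (so that $\rho-\bar\rho$ is a unit mod $p$); in particular, a prime that is split in the maximal order of $\Q(\sqrt{t^2-4n})$ but divides the conductor still has $p\mid t^2-4n$ and must be placed in the second case. Keeping $\gcd(p,n)=1$ in force to guarantee that $\rho,\bar\rho$ are units, correctly handling the negative exponent when $k=0$, and tracking the reduction $p^2-1\equiv-1\pmod p$ that produces the coefficient $k-m-1$ are the places where care is needed.
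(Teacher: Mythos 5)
Your proof is correct and follows essentially the same route as the paper's: the closed form \eqref{eq: P}, Fermat-type order arguments in $\mathcal{O}_K$ modulo the primes above $p$ (with $\gcd(p,n)=1$ making $\rho,\bar{\rho}$ units), the computation $P_{m(p^2-1)}(t,n)\equiv(\rho^{-1}-\bar{\rho}^{-1})/(\rho-\bar{\rho})=-n^{-1}$ for $k=0$, and the collapse of the geometric sum to $(k-1)\rho^{k-2}$ in the repeated-root case. Minor cosmetic differences: the paper quotes the periodicity from Cases 1--2 of Theorem~\ref{thm: traceclassification} rather than rederiving it, and handles $t^2=4n$ via the explicit formula $(k-1)2^{2-k}t^{k-2}$ instead of folding it into the repeated-root computation as you do.

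One point deserves emphasis: your ``delicate point'' about conductor primes is not mere bookkeeping --- it identifies and repairs a genuine imprecision in the paper. The paper's proof asserts (in Cases 1--2 of Theorem~\ref{thm: traceclassification}, on which the lemma's unramified case relies) that $p$ unramified in $\Q(\sqrt{t^2-4n})$ forces $\sqrt{t^2-4n}\notin(p)$; this fails when $p$ divides the conductor of $\Z[\rho]$, i.e.\ when $p^2\mid t^2-4n$ with $p$ split or inert in the maximal order. Under the literal field-theoretic reading the unramified clause of the lemma is in fact false: take $p=5$, $(t,n)=(1,19)$, so $t^2-4n=-75$ and $5$ is inert in $\Q(\sqrt{-3})$, yet $\rho\equiv\bar{\rho}\equiv 3\pmod{\mathfrak p}$, giving $P_4(1,19)=-18\equiv 2$ while $P_{28}(1,19)\equiv(k-m-1)n^{(k-2)/2}\equiv 3\pmod 5$, not $P_4$. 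Your reading --- placing all $t$ with $p\mid t^2-4n$ in the second case --- is exactly the dichotomy that makes the lemma true, matches the repeated-root structure of $X^2-tX+n$ mod $p$, and is the one needed downstream in the proof of Theorem~\ref{thm: hatclassification}, where the sets $S$ and $T$ should likewise be read as partitioning $t$ according to $p\mid t^2-4n$ versus $p\nmid t^2-4n$.
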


\begin{proof}
Suppose first that $p$ is not ramified in $K:= \Q(\sqrt{t^2-4n})$. By the periodicity established in Cases 1 and 2 of the proof of Theorem \ref{thm: traceclassification}, we have $P_k(t,n) \equiv P_{k+m(p^2-1)}(t,n) \pmod{p}$ if $k \ge 4$, and $P_{m(p^2-1)}(t,n) \equiv P_{p^2-1}(t,n)$ if $k = 0$. For $k \ge 4$, this is the desired statement; for $k = 0$, we note that if $\mathfrak{p}$ is a prime of $\mathcal{O}_K$ above $p$, then $\rho - \bar{\rho} = \sqrt{t^2 - 4n} \notin \mathfrak{p}$ by the assumption that $p$ does not ramify in $K$. Also, $\rho, \bar{\rho} \notin \mathfrak{p}$ by assumption that $\gcd(p,n) = 1$.
Hence, we have that $\rho^{p^2-1}, \bar{\rho}^{p^2-1} \equiv 1 \in \mathcal{O}_K/\mathfrak{p}$, and so
$$P_{p^2-1}(t,n) = \frac{\rho^{p^2-2}-\bar{\rho}^{p^2-2}}{\rho - \bar{\rho}} \equiv \frac{\rho^{-1} - \bar{\rho}^{-1}}{\rho - \bar{\rho}} = \frac{-n^{-1}\sqrt{t^2-4n}}{\sqrt{t^2-4n}} = -n^{-1} \in \mathcal{O}_K/\mathfrak{p}.$$

Now, suppose that $p$ is ramified in $\Q(\sqrt{t^2-4n})$, so $p$ divides $t^2-4n$. Then with notation as before, we have that $\sqrt{t^2-4n} \in \mathfrak{p}$, and as we are assuming that $n \notin (p)$, we must have that $t \notin (p)$ as well. In particular, this means that $\rho, \bar{\rho} \equiv \frac{t}{2} \ne 0 \in \mathcal{O}_K/\mathfrak{p}$.
Consequently, it follows from \eqref{eq: case3ram} that \begin{align*}
P_{k+m(p^2-1)}(t,n) &\equiv (k+m(p^2-1)-1)\rho^{k+m(p^2-1)-2} \\
&\equiv (k-m-1)\rho^{k-2} \equiv (k-m-1)n^{\frac{k-2}{2}} \in \mathcal{O}_K/\mathfrak{p},
\end{align*} 
where we use that $\rho \equiv \frac{t}{2}$ and $(\frac{t}{2})^2 \equiv n \in \mathcal{O}_K/\mathfrak{p}$.
Similarly, if $t^2 - 4n = 0$, then the proof of Theorem \ref{thm: traceclassification} shows that
$$P_{k+m(p^2-1)}(t,n) = (k+m(p^2-1)-1) 2^{2-k-m(p^2-1)} t^{k+m(p^2-1)-2} \equiv (k-m-1)n^{\frac{k-2}{2}} \pmod{p}. $$
\end{proof}

Furthermore, we will need the following important property of the Hurwitz class numbers \cite{KroneckerHurwitz}:
\begin{thm*}[Kronecker-Hurwitz Relation]
For any positive integer $n$, we have that
\begin{equation}\label{eq: kroneckerhurwitz}
    \sum_{t = -\infty}^\infty H(4n-t^2) = \sum_{dd' = n} \max(d,d'),
\end{equation}
where the second sum is taken over all factorizations $dd' = n$ with $d,d' \geq 1$.
\end{thm*}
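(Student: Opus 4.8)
The plan is to deduce the relation from the Eichler--Selberg trace formula applied in the degenerate weight $k=2$. The essential input is that $S_2 = \{0\}$ for $\SL_2(\Z)$, so that $\Tr T(n) = 0$ identically. First I would record the value of the polynomial $P_2(t,n)$: since $P_2(t,n)$ is the coefficient of $x^0$ in $(1-tx+nx^2)^{-1}$, we have $P_2(t,n) = 1$ for every $t$ and $n$. Substituting this, together with $\Tr T(n) = 0$, into the weight-$2$ trace formula collapses it to a single linear relation among $\sum_t H(4n-t^2)$, $\sum_{dd'=n}\min(d,d')$, and a weight-$2$ correction term.

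The crux of the argument is the precise form of the trace formula at $k=2$. The version recorded above is valid only for $k \ge 4$; at $k=2$ it acquires an extra term $\sigma_1(n) := \sum_{d \mid n} d$, which reflects the failure of the weight-$2$ Eisenstein series to be holomorphic (equivalently, the appearance of the quasimodular form $E_2$). Granting this, the weight-$2$ formula reads
\begin{equation*}
0 = \Tr T(n)\big|_{S_2} = -\frac{1}{2}\sum_{t=-\infty}^{\infty} H(4n-t^2) - \frac{1}{2}\sum_{dd'=n}\min(d,d') + \sigma_1(n).
\end{equation*}

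Finally I would finish with elementary bookkeeping. Rearranging the displayed identity gives $\sum_t H(4n-t^2) = 2\sigma_1(n) - \sum_{dd'=n}\min(d,d')$. On the other hand, each ordered factorization $dd'=n$ satisfies $\min(d,d') + \max(d,d') = d + d'$, and $\sum_{dd'=n}(d+d') = 2\sigma_1(n)$ since every divisor of $n$ occurs exactly once as $d$; hence $\sum_{dd'=n}\max(d,d') = 2\sigma_1(n) - \sum_{dd'=n}\min(d,d')$. Comparing the two expressions yields $\sum_t H(4n-t^2) = \sum_{dd'=n}\max(d,d')$, as claimed.

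The main obstacle is precisely the justification of the $k=2$ trace formula together with its anomalous term $\sigma_1(n)$, since weight $2$ is exactly the case excluded from the statement recorded here. To keep the argument self-contained I would derive it by holomorphic projection of the non-holomorphic weight-$2$ Eisenstein series (or via the Hijikata form of the trace formula), carefully tracking the boundary contribution that produces $\sigma_1(n)$; alternatively one may simply cite the classical extension due to Eichler and Zagier. Once this ingredient is in hand, the remainder is a formal computation.
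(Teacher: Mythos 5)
Your proposal is correct, but it is worth noting that the paper does not prove this statement at all: it is quoted as a classical result with a citation, and it is then \emph{used} as an input to prove Theorem~\ref{thm: hatclassification}. You instead derive it from the weight-$2$ degeneration of the Eichler--Selberg trace formula, which is a genuine (and classical) proof --- indeed it is essentially the remark made in Zagier's appendix, the same source the paper cites for the trace formula itself. Your computations check out: $P_2(t,n)$ is the coefficient of $x^0$ in $(1-tx+nx^2)^{-1}$, so $P_2(t,n)=1$; with $S_2=\{0\}$ and the anomalous term the formula reads $0 = -\frac12\sum_t H(4n-t^2) - \frac12\sum_{dd'=n}\min(d,d') + \sigma_1(n)$, and since $\min(d,d')+\max(d,d')=d+d'$ and $\sum_{dd'=n}(d+d')=2\sigma_1(n)$, the relation follows. (One can sanity-check the $\sigma_1(n)$ correction at $n=1$: $\sum_t H(4-t^2)=2(-\tfrac1{12})+2(\tfrac13)+\tfrac12=1$, and $-\tfrac12\cdot 1-\tfrac12\cdot 1+\sigma_1(1)=0$.) You correctly flag the one substantive burden: the trace formula as stated in the paper requires $k\ge 4$, and the $k=2$ case with its $\sigma_1(n)$ term must be established independently, e.g.\ by holomorphic projection of the non-holomorphic weight-$2$ Eisenstein series. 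This creates no circularity, since the standard proofs of the trace formula (including at $k=2$) do not use the Kronecker--Hurwitz relation. The trade-off between the two routes is clear: the original arguments of Kronecker and Hurwitz are elementary counts of classes of binary quadratic forms and are self-contained, whereas your derivation leverages heavy machinery already present in the paper and exposes the class number relation as the ``weight-$2$ shadow'' of the same identity family that drives Lemma~\ref{lemma: Prelation} and Theorem~\ref{thm: hatclassification} --- conceptually tidy, but only as self-contained as your proof of the $k=2$ formula.
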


\newpage
\begin{proof}[Proof of Theorem~\ref{thm: hatclassification}]
    We use the Eichler-Selberg trace formula \eqref{eq: traceformula} to show that $$m \cdot \Tr_{k+(p^2-1)}(n) \equiv \Tr_{k+m(p^2-1)}(n) \pmod p$$ for $k \in \{0,4,6,8,10,14\}$ and $n \geq 1$ such that $\gcd(p,n) = 1$.
    The proof of Theorem~\ref{thm: traceclassification}~(i) shows that $P_{k+(p^2-1)}(t,n) \equiv P_{k+m(p^2-1)}(t,n) \pmod p$ whenever $p$ is unramified in $\Q(\sqrt{t^2-4n})$. Thus, denoting by $S$ (resp.~$T$) the set of $t \in \Z$ such that $t^2 \leq 4n$ and $p$ is ramified (resp.~unramified) in $\Q(\sqrt{t^2-4n})$ (where $t \in S$ if $t^2 = 4n$), we need to show that
    \begin{align}\label{eq: kcases}
    \begin{split}
    \sum_{t \in S}&\Big(m \, P_{k+p^2-1}(t,n)-P_{k+m(p^2-1)}(t,n)\Big)H(4n-t^2)\\
    &+\sum_{t \in T} (m-1)P_{k+p^2-1}(t,n)H(4n-t^2)+\sum_{dd' = n}(m-1)\min(d,d')^{k+p^2-2} \equiv 0 \pmod p,
    \end{split}
    \end{align}
    using that we have $P_{k+p^2-1}(t,n) \equiv P_{k+m(p^2-1)}(t,n) \pmod{p}$ for $t \in T$ by Lemma~\ref{lemma: Prelation}, and that $\min(d,d')^{k+p^2-2} \equiv \min(d,d')^{k+m(p^2-1)-1} \pmod{p}.$
    
    Suppose first that $k \neq 0$. We note that for $t \in S$, by Lemma~\ref{lemma: Prelation} we have
    $$m\, P_{k+p^2-1}(t,n) - P_{k+m(p^2-1)}(t,n) \equiv m(k-2)n^{\frac{k-2}{2}} - (k-m-1)n^{\frac{k-2}{2}}$$
    $$= (m-1)(k-1) n^{\frac{k-2}{2}} \equiv (m-1)P_k(t,n) \pmod p.$$
    Similarly, for $t \in T$, Lemma~\ref{lemma: Prelation} shows that $P_{k+p^2-1}(t,n) \equiv P_k(t,n)$. Moreover, since $d,d' \not\equiv 0 \pmod p$, we have
    $\min(d,d')^{k+p^2-2} \equiv \min(d,d')^{k-1} \pmod p$.
    Thus \eqref{eq: kcases} simplifies to 
    \begin{equation*}
    (m-1)\Big(\sum_{t = -\infty}^\infty P_k(t,n) H(4n-t^2)+\sum_{dd' = n} \min(d,d')^{k-1}\Big) \equiv -2(m-1)\Tr_k(n) \equiv 0 \pmod p,
    \end{equation*}
    where the last relation holds since $S_k = 0$ and therefore
    $\Tr_k(n) = 0$ for each $k \in \{4,6,8,10,14\}$.
    
    Now, consider the case in which $k = 0$. By Lemma~\ref{lemma: Prelation}, we have that $$P_{m(p^2-1)}(t,n) \equiv -n^{-1} \pmod p$$ whenever $t \in T$, and that
    $$mP_{p^2-1}(t,n) - P_{m(p^2-1)}(t,n) \equiv m \cdot (-2)n^{-1} - (-m-1) n^{-1} = (1-m) n^{-1} \pmod{p}$$
    for $t \in S$. Hence \eqref{eq: kcases} simplifies to
    \begin{align*}
    \sum_{t \in S}(m-1)(-n^{-1})H(4n-t^2)+\sum_{t \in T}& (m-1)(-n^{-1})H(4n-t^2)\\
    &+\sum_{dd' = n}(m-1)\min(d,d')^{p^2-2} \equiv 0 \pmod p,
    \end{align*}
    which is equivalent to the statement that
    \begin{align*}
    (m-1)\Big(-n^{-1}\sum_{t = -\infty}^\infty H(4n-t^2)+\sum_{dd' = n} \min(d,d')^{p^2-2}\Big) \equiv 0 \pmod p.
    \end{align*}
    By the Kronecker-Hurwitz relation \eqref{eq: kroneckerhurwitz}, we can write the above as
    \begin{equation*}
    (m-1)\Big(-n^{-1}\sum_{dd' = n}\max(d,d') + \sum_{dd' = n} \min(d,d')^{p^2-2}\Big) \equiv 0 \pmod p.
    \end{equation*}
    For any $d,d'$ such that $dd' = n$, we have that $\min(d,d')^{p^2-2} \equiv \min(d,d')^{-1} \pmod p$, since $\min(d,d') \not\equiv 0 \pmod p$. Moreover, if we assume without loss of generality that $d  \leq d'$, then $-n^{-1}\max(d,d') + \min(d,d')^{p^2-2} = -n^{-1}d'+d^{p^2-2} \equiv -d^{-1}+d^{-1} \equiv 0 \pmod p.$ This shows that $\big(-n^{-1}\sum_{dd' = n}\max(d,d') + \sum_{dd' = n} \min(d,d')^{p^2-2}\big)$ vanishes term-wise mod $p$, and so we conclude that \eqref{eq: kcases} holds for $k = 0$.
\end{proof}


\section{Divisor Polynomials mod \texorpdfstring{$p$}{p}} \label{sec: modformsmodp}
Recall that the space $M_k$ of modular forms of weight $k$ on $\SL_2(\Z)$ is generated over $\C$ by the monomials $E_4^\alpha E_6^\beta$ such that $4\alpha + 6 \beta = k$ and $\alpha,\beta \in \Z_{\geq 0}$. Given $\Delta(z) := \frac{E_4(z)^3 - E_6(z)^2}{1728}$ and $j(z) := \frac{E_4(z)^3}{\Delta(z)}$, this means that every modular form $f \in M_k$ can be decomposed as in \eqref{eq: fdecomposition},
where $m = \dim S_k$, the divisor polynomial $F(f;x) \in \C[x]$ is a polynomial of degree $m$, and
\begin{equation}\label{eq: epdelta}
\delta :=
\begin{cases}
0 & k \equiv 0 \pmod 6,\\
1 & k \equiv 4 \pmod 6,\\
2 & k \equiv 2 \pmod 6,
\end{cases} \quad 
\textrm{ and } \quad
\ep :=
\begin{cases}
0 & k \equiv 0 \pmod 4,\\
1 & k \equiv 2 \pmod 4.
\end{cases}
\end{equation}
The divisor polynomial of a product of modular forms can be described as follows:

\begin{prop}\label{prop: table}
If $f$ and $g$ are modular forms of even weights $k_1,k_2 \ge 4$, respectively, then
$$F(fg;x) = x^{a_{k_1,k_2}} (x-1728)^{b_{k_1,k_2}} F(f;x) F(g;x),$$
where the values of $(a_{k_1,k_2},b_{k_1,k_2})$ are given in the following table:
\vspace{2mm}
\begin{table}[H]
    \centering
\begin{tabu}{|c|c|c|c|c|c|c|}
    \hline 
     $\pmod{12}$ & $k_1 \equiv 0$ & $k_1 \equiv 2$ & $k_1 \equiv 4$ & $k_1 \equiv 6$ & $k_1 \equiv 8$ & $k_1 \equiv 10$\\ \hline
     $k_2 \equiv 0$ & $(0,0)$ & $(0,0)$ & $(0,0)$ & $(0,0)$ & $(0,0)$ & $(0,0)$\\ \hline
     $k_2 \equiv 2$ & $(0,0)$ & $(1,1)$ & $(1,0)$ & $(0,1)$ & $(1,0)$ & $(1,1)$\\ \hline
     $k_2 \equiv 4$ & $(0,0)$ & $(1,0)$ & $(0,0)$ & $(0,0)$ & $(1,0)$ & $(0,0)$\\ \hline
     $k_2 \equiv 6$ & $(0,0)$ & $(0,1)$ & $(0,0)$ & $(0,1)$ & $(0,0)$ & $(0,1)$\\ \hline
     $k_2 \equiv 8$ & $(0,0)$ & $(1,0)$ & $(1,0)$ & $(0,0)$ & $(1,0)$ & $(1,0)$\\ \hline
     $k_2 \equiv 10$ & $(0,0)$ & $(1,1)$ & $(0,0)$ & $(0,1)$ & $(1,0)$ & $(0,1)$\\ \hline
\end{tabu}
\vspace{2mm}
\caption{Values of $(a_{k_1,k_2},b_{k_1,k_2})$ for each $k_1,k_2 \pmod{12}$.}
\label{tbl: ab}
\end{table}
\end{prop}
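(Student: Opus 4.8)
The plan is to expand the product $fg$ directly using the canonical decomposition \eqref{eq: fdecomposition} of each factor and then renormalize the result into the canonical form for a weight-$(k_1+k_2)$ form, recording the factors of $x$ and $x-1728$ that appear. Write $k_i = 12m_i + 4\delta_i + 6\ep_i$ with $(\delta_i,\ep_i)$ determined by $k_i \pmod{12}$ as in \eqref{eq: epdelta}, so that
\begin{equation*}
f(z) = \Delta(z)^{m_1} E_4(z)^{\delta_1} E_6(z)^{\ep_1} F(f;j(z)), \qquad g(z) = \Delta(z)^{m_2} E_4(z)^{\delta_2} E_6(z)^{\ep_2} F(g;j(z)).
\end{equation*}
Multiplying gives $fg = \Delta^{m_1+m_2} E_4^{\delta_1+\delta_2} E_6^{\ep_1+\ep_2} F(f;j(z))F(g;j(z))$. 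The only obstruction to this already being in canonical form is that the exponents $\delta_1+\delta_2 \in \{0,\dots,4\}$ and $\ep_1+\ep_2 \in \{0,1,2\}$ may exceed the allowed ranges $\{0,1,2\}$ and $\{0,1\}$.

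Next I would record the two reduction identities that follow at once from the definitions $\Delta = (E_4^3-E_6^2)/1728$ and $j = E_4^3/\Delta$, namely
\begin{equation*}
E_4^3 = \Delta\, j \qquad \text{and} \qquad E_6^2 = E_4^3 - 1728\Delta = \Delta\,(j-1728).
\end{equation*}
Each use of the first identity trades $E_4^3$ for one extra power of $\Delta$ and a factor of $j$, and each use of the second trades $E_6^2$ for one extra power of $\Delta$ and a factor of $j-1728$. Set $a := a_{k_1,k_2}$ equal to $1$ if $\delta_1+\delta_2 \ge 3$ and $0$ otherwise, and $b := b_{k_1,k_2}$ equal to $1$ if $\ep_1+\ep_2 = 2$ and $0$ otherwise. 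Since $\delta_1+\delta_2 \le 4$, the first identity is applied exactly $a$ times, leaving an $E_4$-exponent $\delta_1+\delta_2-3a \in \{0,1,2\}$; since $\ep_1+\ep_2 \le 2$, the second is applied exactly $b$ times, leaving an $E_6$-exponent $\ep_1+\ep_2-2b \in \{0,1\}$. A weight count shows the total weight is preserved, and the power of $\Delta$ becomes $m_1+m_2+a+b$.

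Comparing the reduced expression with the canonical decomposition $fg = \Delta^{m_{12}} E_4^{\delta} E_6^{\ep} F(fg;j(z))$ for $k_1+k_2$ and invoking the uniqueness of \eqref{eq: fdecomposition} then forces $\delta = \delta_1+\delta_2-3a$, $\ep = \ep_1+\ep_2-2b$, $m_{12} = m_1+m_2+a+b$, and
\begin{equation*}
F(fg;x) = x^{a}\,(x-1728)^{b}\,F(f;x)\,F(g;x);
\end{equation*}
this is consistent with degrees, since $\deg F(fg;x) = m_{12} = m_1+m_2+a+b$ equals the degree of the right-hand side. It remains only to check that $(a,b)$ matches Table~\ref{tbl: ab} in each of the $36$ residue classes, which is a direct tabulation using $0\mapsto(0,0)$, $2\mapsto(2,1)$, $4\mapsto(1,0)$, $6\mapsto(0,1)$, $8\mapsto(2,0)$, $10\mapsto(1,1)$ for $(\delta_i,\ep_i)$ in terms of $k_i \pmod{12}$. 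The argument presents no genuine obstacle beyond this bookkeeping; the one point demanding care is the passage from the numerical identity in $j(z)$ to a polynomial identity in $x$, which is legitimate because $j(z)$ is transcendental over $\C$, so any identity of holomorphic functions expressed through $j(z)$ yields the corresponding identity of polynomials in $x$.
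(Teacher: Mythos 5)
Your proof is correct and takes essentially the same approach as the paper: both multiply the canonical decompositions \eqref{eq: fdecomposition} of $f$ and $g$ and renormalize via the identities $E_4^3 = \Delta\, j$ and $E_6^2 = \Delta\,(j-1728)$, reading off the exponents of $x$ and $x-1728$ case by case mod $12$. The only difference is presentational: where the paper computes the $36$ entries individually (exhibiting one representative case), you condense them into the uniform rule $a = 1 \Leftrightarrow \delta_1+\delta_2 \ge 3$, $b = 1 \Leftrightarrow \ep_1+\ep_2 = 2$, which does reproduce Table~\ref{tbl: ab} in all cases.
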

\vspace{-0.5cm}

\begin{proof}
    Each entry of the table is computed in the same way, so we provide only an illustrative example. Suppose that $f$ is a modular form of weight $k_1 \equiv 2 \pmod{12}$ and $g$ is a modular form of weight $k_2 \equiv 10 \pmod{12}$. Then we have
    $$f(z) = \Delta(z)^{\frac{k_1-14}{12}} E_4(z)^2 E_6(z) F(f;j(z)),$$
    $$g(z) = \Delta(z)^{\frac{k_2-10}{12}} E_4(z) E_6(z) F(g;j(z)).$$
    Using that $f(z)g(z)$ is modular of weight $k_1 + k_2 \equiv 0 \pmod{12}$, it follows that
    $$f(z)g(z) = \Delta(z)^{\frac{k_1+k_2}{12}} F(fg;j(z)).$$
    The above equations then give
    \begin{align*}
    \Delta^{\frac{k_1+k_2-24}{12}} E_4^3 E_6^2 F(f;j(z)) F(g;j(z)) &= \Delta^{\frac{k_1+k_2}{12}} F(fg;j(z)) \\
    \implies F(fg;j(z))= \Delta^{-2} E_4^3 E_6^2 F(f;j(z)) F(g;j(z)) &= j(z)(j(z)-1728) F(f;j(z)) F(g;j(z)),
    \end{align*}
    where we are using that $E_4(z)^3 = \Delta(z) j(z)$ and $E_6(z)^2 = \Delta(z)(j(z)-1728)$.
\end{proof}

We now turn to reductions of modular forms mod $p$. If $f(z)$ is a modular form whose $q$-expansion has $p$-integral coefficients, we can reduce all coefficients of $f, \Delta, E_4$, and $E_6$ mod $p$ and
compute the divisor polynomial as before. We recall that 
$E_{p-1} \equiv 1 \in \F_p[[q]]$, and that 
if $f$ and $g$ are modular forms of even weights $k_1,k_2 \ge 4$, respectively, such that $f(z) \equiv g(z) \not\equiv 0 \in \F_p[[q]]$, then $k_2 - k_1 \equiv 0 \pmod{p-1}$ \cite[Proposition 2.43]{OnoCBMS}. In particular, if $k_2 \ge k_1$ and $n= \frac{k_2-k_1}{p-1}$, then $g$ and $E_{p-1}^n f$ are modular forms of weight $k_2$ with the same reduction mod $p$, and hence the same divisor polynomial mod $p$. Recalling from \cite{K-Z} that $F(E_{p-1};x) \equiv \widetilde{S}_p(x) \in \F_p[x]$, we can describe the relationship between the divisor polynomials of $f$ and $g$ mod $p$ by the following propositions:

\begin{prop}\label{prop: epn}
For $p \geq 5$ prime and $n \geq 1$, we have
    \begin{equation*}
    F(E_{p-1}^n;x) \equiv F(E_{p-1};x)^n x^{\delta_p \lfloor n/3 \rfloor} (x-1728)^{\ep_p \lfloor n/2 \rfloor} \in \F_p[x],
    \end{equation*}
    where $\delta_p$ and $\ep_p$ are as in \eqref{def: deltaepsilon}.
\end{prop}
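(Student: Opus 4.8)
The plan is to prove the identity directly from the canonical decomposition \eqref{eq: fdecomposition} of $E_{p-1}$, without induction. The first observation I would record is that when $f = E_{p-1}$ has weight $p-1$, the exponents $\delta,\ep$ prescribed by \eqref{eq: epdelta} coincide with $\delta_p,\ep_p$ from \eqref{def: deltaepsilon}. Indeed, for a prime $p \geq 5$ one has $p-1 \equiv 0 \pmod 6$ exactly when $p \equiv 1 \pmod 3$ and $p-1 \equiv 4 \pmod 6$ exactly when $p \equiv 2 \pmod 3$ (the residue $2 \pmod 6$ never occurring), which matches the definition of $\delta_p$; likewise $p-1 \equiv 0 \pmod 4$ iff $p \equiv 1 \pmod 4$, matching $\ep_p$. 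Writing $m_0 = \dim S_{p-1}$, we thus have $E_{p-1} = \Delta^{m_0} E_4^{\delta_p} E_6^{\ep_p}\, F(E_{p-1};j)$, where here and below $j = j(z)$.

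Raising to the $n$-th power gives $E_{p-1}^n = \Delta^{n m_0} E_4^{n\delta_p} E_6^{n\ep_p}\, F(E_{p-1};j)^n$. The key manipulation is to reduce the exponents of $E_4$ and $E_6$ into the canonical ranges $\{0,1,2\}$ and $\{0,1\}$ via the exact identities $E_4^3 = \Delta\,j$ and $E_6^2 = \Delta(j-1728)$, both of which follow from $\Delta = (E_4^3 - E_6^2)/1728$. Writing $n\delta_p = 3\lfloor n\delta_p/3\rfloor + r_4$ and $n\ep_p = 2\lfloor n\ep_p/2\rfloor + r_6$ with $r_4 \in \{0,1,2\}$ and $r_6 \in \{0,1\}$, substitution yields
\begin{equation*}
E_{p-1}^n = \Delta^{\,n m_0 + \lfloor n\delta_p/3\rfloor + \lfloor n\ep_p/2\rfloor}\, E_4^{r_4} E_6^{r_6}\, j^{\lfloor n\delta_p/3\rfloor}\, (j-1728)^{\lfloor n\ep_p/2\rfloor}\, F(E_{p-1};j)^n.
\end{equation*}

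To finish I must confirm that this is the canonical decomposition \eqref{eq: fdecomposition} of $E_{p-1}^n$, i.e.\ that the leading exponents $r_4,r_6$ are precisely the values $\delta',\ep'$ forced by \eqref{eq: epdelta} for weight $n(p-1)$. Rather than case analysis on $p \bmod 12$ and $n \bmod 6$, I would argue by uniqueness: since $4r_4 + 6r_6 \equiv 4n\delta_p + 6n\ep_p = n(p-1) \pmod{12}$ (the terms $4\cdot 3$ and $6\cdot 2$ vanishing mod $12$), and since $(\delta',\ep') \mapsto 4\delta'+6\ep' \bmod 12$ is a bijection from $\{0,1,2\}\times\{0,1\}$ onto the even residues mod $12$, the pair $(r_4,r_6)$ must equal the canonical $(\delta',\ep')$. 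Reading off the divisor polynomial then gives $F(E_{p-1}^n;x) = x^{\lfloor n\delta_p/3\rfloor}(x-1728)^{\lfloor n\ep_p/2\rfloor} F(E_{p-1};x)^n$, and since $\delta_p,\ep_p \in \{0,1\}$ we have $\lfloor n\delta_p/3\rfloor = \delta_p\lfloor n/3\rfloor$ and $\lfloor n\ep_p/2\rfloor = \ep_p\lfloor n/2\rfloor$; reducing mod $p$ gives the claim. The one genuine subtlety is this last verification that the reduced exponents land in canonical position, which the bijectivity remark disposes of uniformly; everything else is bookkeeping. An alternative route would be induction on $n$, writing $E_{p-1}^n = E_{p-1}^{n-1}\cdot E_{p-1}$ and applying Proposition~\ref{prop: table}, but that forces a tabular case check over $p \bmod 12$, so I prefer the direct computation above.
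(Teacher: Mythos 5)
Your proof is correct, but it takes a genuinely different route from the paper's. The paper argues by induction on $n$, writing $E_{p-1}^n = E_{p-1}\cdot E_{p-1}^{n-1}$ and invoking Proposition~\ref{prop: table}, so the factors $x^{a}(x-1728)^{b}$ accumulate one step at a time according to Table~\ref{tbl: ab}, whose entries were themselves computed case by case on residues mod $12$. You instead expand $E_{p-1}^n = \Delta^{nm_0}E_4^{n\delta_p}E_6^{n\ep_p}F(E_{p-1};j)^n$ directly and renormalize the exponents of $E_4$ and $E_6$ via $E_4^3=\Delta j$ and $E_6^2=\Delta(j-1728)$; the one point of substance --- that the residual exponents $(r_4,r_6)$ are exactly the canonical $(\delta',\ep')$ for weight $n(p-1)$ --- you dispose of uniformly by noting that $(\delta,\ep)\mapsto 4\delta+6\ep \bmod 12$ is a bijection of $\{0,1,2\}\times\{0,1\}$ onto the even residues, combined with the uniqueness of the decomposition \eqref{eq: fdecomposition}. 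Your individual checks are all sound: the identification $\delta=\delta_p$, $\ep=\ep_p$ at weight $p-1$ (with the residue $p-1\equiv 2\pmod 6$ correctly ruled out for $p\ge 5$), the congruence $4r_4+6r_6\equiv n(p-1)\pmod{12}$, and the simplification $\lfloor n\delta_p/3\rfloor=\delta_p\lfloor n/3\rfloor$, $\lfloor n\ep_p/2\rfloor=\ep_p\lfloor n/2\rfloor$ using $\delta_p,\ep_p\in\{0,1\}$. What your approach buys: it avoids the tabular case analysis entirely, it makes transparent where the floors $\lfloor n/3\rfloor$ and $\lfloor n/2\rfloor$ come from (each $E_4^3$ and $E_6^2$ trading for a $\Delta$ and a factor of $j$ or $j-1728$), and it in fact establishes the identity exactly in $\Q[x]$, with reduction mod $p$ deferred to the final step, so it is both self-contained and slightly stronger than stated. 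What the paper's route buys: since Proposition~\ref{prop: table} is needed anyway for Proposition~\ref{prop: divpolycong}, the induction there is an essentially free corollary of machinery already in place. One notational quibble: $4n\delta_p+6n\ep_p$ equals $n(p-1)$ only modulo $12$ (they differ by $12nm_0$), so the chain in your argument should read $4r_4+6r_6\equiv 4n\delta_p+6n\ep_p\equiv n(p-1)\pmod{12}$ rather than asserting equality in the middle.
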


\begin{proof}
    This follows by induction on $n$ from the formula
    \begin{equation*}
        F(E_{p-1}^n;x) \equiv F(E_{p-1};x) F(E_{p-1}^{n-1};x) x^{a_{p-1, (n-1)(p-1)}} (x-1728)^{b_{p-1, (n-1)(p-1)}} \in \F_p[x],
    \end{equation*}
    where the values of $(a_{p-1, (n-1)(p-1)},b_{p-1,(n-1)(p-1)})$ are given in Table~\ref{tbl: ab}.
\end{proof}

\begin{proposition}\label{prop: divpolycong}
Let $p \geq 5$ be prime, $n$ be a positive integer, and $f \in M_k$ and $g \in M_{k+n(p-1)}$ be such that $f(z) \equiv g(z) \in \F_p[[q]]$. Then
    \begin{equation*}
        F(g;x) \equiv F(f;x)
        \widetilde{S}_p(x)^n x^{\delta_p \lfloor n/3 \rfloor + a} (x - 1728)^{\ep_p \lfloor n/2 \rfloor + b} \in \F_p[x],
    \end{equation*}
    where $\delta_p$ and $\ep_p$ are given in \eqref{def: deltaepsilon}, and $a,b$ are given by $(a_{k, n(p-1)}, b = b_{k,n(p-1)})$ in Table~\ref{tbl: ab}.
\end{proposition}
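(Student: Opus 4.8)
The plan is to reduce everything to the two preceding propositions together with Deligne's identity $F(E_{p-1};x) \equiv \widetilde{S}_p(x) \pmod p$. The starting point is the observation recalled just before the statement: since $E_{p-1} \equiv 1 \in \F_p[[q]]$, the form $E_{p-1}^n f \in M_{k+n(p-1)}$ reduces mod $p$ to $f \equiv g$. Because $E_{p-1}^n f$ and $g$ have the same weight and the same reduction mod $p$, they share a divisor polynomial mod $p$, so that $F(g;x) \equiv F(E_{p-1}^n f;x) \pmod p$. This replaces the hypothesis $f \equiv g$ by an explicit product whose divisor polynomial I can compute directly.

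First I would apply Proposition~\ref{prop: table} to the product of $E_{p-1}^n$ (of weight $n(p-1) \ge p-1 \ge 4$) and $f$ (of weight $k \ge 4$), which gives
\begin{equation*}
F(E_{p-1}^n f; x) = x^{a_{k,n(p-1)}} (x-1728)^{b_{k,n(p-1)}} F(E_{p-1}^n;x) F(f;x).
\end{equation*}
Next I would substitute Proposition~\ref{prop: epn}, namely $F(E_{p-1}^n;x) \equiv F(E_{p-1};x)^n x^{\delta_p \lfloor n/3 \rfloor}(x-1728)^{\ep_p \lfloor n/2 \rfloor}$, and replace $F(E_{p-1};x)$ by $\widetilde{S}_p(x)$ using Deligne's identity. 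Collecting the powers of $x$ and of $x-1728$ then yields
\begin{equation*}
F(g;x) \equiv F(f;x)\,\widetilde{S}_p(x)^n\, x^{\delta_p \lfloor n/3 \rfloor + a_{k,n(p-1)}} (x-1728)^{\ep_p \lfloor n/2 \rfloor + b_{k,n(p-1)}} \pmod p,
\end{equation*}
which is exactly the claimed formula with $a = a_{k,n(p-1)}$ and $b = b_{k,n(p-1)}$.

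Since each ingredient is already in hand, there is no serious obstacle; the proof is an assembly. The one point requiring care is the bookkeeping of exponents: one must check that the arguments of the table entries are correct (here the first weight is $k$ and the second is $n(p-1)$, and by the symmetry of $F(fg;x)$ in $f$ and $g$ the order is immaterial), and that the floor exponents $\lfloor n/3 \rfloor$ and $\lfloor n/2 \rfloor$ are carried through unchanged from Proposition~\ref{prop: epn}. I would also note explicitly that $\Delta, E_4, E_6$ all have $p$-integral $q$-expansions, so that reduction mod $p$ commutes with forming the divisor polynomial and the identification $F(g;x) \equiv F(E_{p-1}^n f;x)$ is legitimate.
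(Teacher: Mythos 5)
Your proposal is correct and takes essentially the same route as the paper's own proof: both replace $g$ by $E_{p-1}^n f$ (legitimate since the two forms have the same weight $k+n(p-1)$ and the same reduction mod $p$), then apply Proposition~\ref{prop: table} to the product, substitute Proposition~\ref{prop: epn}, and invoke the identity $F(E_{p-1};x) \equiv \widetilde{S}_p(x) \in \F_p[x]$. Your additional remarks on the symmetry of Table~\ref{tbl: ab} and the $p$-integrality of $\Delta, E_4, E_6$ are sound bookkeeping but do not change the argument.
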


\begin{proof}
    As noted above, we have that
    $$F(g;x) \equiv F(fE_{p-1}^n;x) \in \F_p[x],$$
    so by Propositions~\ref{prop: table} and \ref{prop: epn}, it follows that
    \begin{align*}
        F(g;x) \equiv F(fE_{p-1}^n; x)
        &\equiv F(f;x) F(E_{p-1}^n;x) x^{a_{k, n(p-1)}} (x-1728)^{b_{k,n(p-1)}} \\
        &\equiv F(f;x) F(E_{p-1};x)^n x^{\delta_p \lfloor n/3 \rfloor + a_{k,n(p-1)}} (x - 1728)^{\ep_p \lfloor n/2 \rfloor + b_{k,n(p-1)}} \in \F_p[x].
    \end{align*}
\end{proof}

\newpage
\section{Proofs of Theorems~\ref{thm: congruence} and \ref{thm: hatcongruence}}\label{sec: mainproofs}
We use the results of Sections~\ref{sec: congruences} and \ref{sec: modformsmodp} to prove more general versions of Theorems~\ref{thm: congruence} and \ref{thm: hatcongruence}.

\subsection{Divisor Polynomials of Trace Forms}\label{subsec: thm4.1}
Theorem~\ref{thm: congruence} and Corollary~\ref{cor: sspmod} are immediate consequences of the following more general result, which uses Theorem~\ref{thm: traceclassification} and Proposition~\ref{prop: divpolycong}.
\begin{theorem}\label{thm: preccong}
    If $p \geq 5$ is prime, $k \geq 4$ is an even integer, and 
    $$n = \begin{cases}
    (p^2+p) \lfloor \frac{k}{p^3-p} \rfloor & k \not\equiv 0,2 \pmod{p^3-p},\\
    (p^2+p) (\lfloor \frac{k}{p^3-p} \rfloor - 1) & k \equiv 0,2 \pmod{p^3-p},
    \end{cases}
    $$
    then
    \begin{equation*}
        F(T_k;x) \equiv F(T_{k-n(p-1)};x) \widetilde{S}_p(x)^n x^{\delta_p \lfloor n/3 \rfloor + a} (x - 1728)^{\ep_p \lfloor n/2 \rfloor + b} \pmod{p},
    \end{equation*}
    where $\delta_p$ and $\ep_p$ are as defined in \eqref{def: deltaepsilon}, and $a,b \in \{0,1\}$ are given by $a_{k, n(p-1)}, b_{k, n(p-1)}$ in Proposition~\ref{prop: table}.
    In particular, $\widetilde{S}_p(x)^n$ divides $F(T_k;x)$ mod $p$.
\end{theorem}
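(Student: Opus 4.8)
The plan is to deduce the factorization directly from the two workhorses already established: the weight-shift congruence $T_{k_2}\equiv T_{k_1}\pmod p$ of Theorem~\ref{thm: traceclassification}~(i), and the divisor-polynomial comparison of Proposition~\ref{prop: divpolycong}. The idea is to exhibit a trace form $T_{k_1}$ of \emph{lower} weight $k_1<k$ with $T_k\equiv T_{k_1}\pmod p$, and then convert this mod-$p$ equality of $q$-expansions into the asserted relation between divisor polynomials, at which point the divisibility by $\widetilde{S}_p(x)^n$ is visible by inspection.

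The arithmetic heart of the argument is the choice of $k_1$. Writing $p^3-p=p(p^2-1)$, Theorem~\ref{thm: traceclassification}~(i) provides $T_{k_2}\equiv T_{k_1}\pmod p$ exactly when $k_2-k_1$ is a positive multiple of $p(p^2-1)$. Setting $c:=\lfloor k/(p^3-p)\rfloor$ and $n:=c(p^2+p)$, one computes $n(p-1)=c(p^2+p)(p-1)=c\,p(p+1)(p-1)=c(p^3-p)$, so that $k-n(p-1)=k-c(p^3-p)$ is precisely the residue of $k$ modulo $p^3-p$, an even integer in $[0,p^3-p)$. The one subtlety is that Theorem~\ref{thm: traceclassification}~(i) requires the lower weight to satisfy $k_1\ge 4$, whereas this residue can equal $0$ or $2$ (weight $2$ being inadmissible); this is exactly why the definition of $n$ replaces $c$ by $c-1$ when $k\equiv 0,2\pmod{p^3-p}$, so that then $k_1=k-n(p-1)=(p^3-p)+0$ or $(p^3-p)+2\ge 4$, while in all other cases the residue is even and at least $4$. (When the resulting $n$ is $0$, the claimed congruence reads $F(T_k;x)\equiv F(T_k;x)$ and $\widetilde{S}_p(x)^0=1$ divides everything, so the statement is vacuous; I would therefore assume $n>0$.)

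With $k_1:=k-n(p-1)\ge 4$ even in hand, Theorem~\ref{thm: traceclassification}~(i), applied with this same $c$, yields $T_k\equiv T_{k_1}\pmod p$ in $\F_p[[q]]$. Since $T_{k_1}\in M_{k_1}$ and $T_k\in M_{k_1+n(p-1)}$ share their reduction mod $p$, Proposition~\ref{prop: divpolycong} (with $f=T_{k_1}$, $g=T_k$) gives
\begin{equation*}
F(T_k;x)\equiv F(T_{k_1};x)\,\widetilde{S}_p(x)^n\,x^{\delta_p\lfloor n/3\rfloor+a'}(x-1728)^{\ep_p\lfloor n/2\rfloor+b'}\pmod p,
\end{equation*}
with $(a',b')=(a_{k_1,\,n(p-1)},\,b_{k_1,\,n(p-1)})$. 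To match the statement verbatim I would note that $p^3-p=p(p-1)(p+1)$ is divisible by $24$ (one of $p\pm1$ is divisible by $4$ and one of $p-1,p+1$ by $3$), hence $12\mid n(p-1)=c(p^3-p)$ and so $k_1\equiv k\pmod{12}$; since Table~\ref{tbl: ab} depends only on weights mod $12$, we get $(a',b')=(a_{k,\,n(p-1)},\,b_{k,\,n(p-1)})=(a,b)$.

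Finally, the divisibility assertion follows at once: every factor on the right-hand side lies in $\F_p[x]$, so $\widetilde{S}_p(x)^n\mid F(T_k;x)$ mod $p$. I do not anticipate a genuine obstacle here, since the theorem is an assembly of Sections~\ref{sec: congruences} and \ref{sec: modformsmodp}; the only care required is the boundary bookkeeping of the second paragraph (guaranteeing $k_1\ge 4$, which forces the $k\equiv 0,2$ case split) and the mod-$12$ alignment of the table indices, together with the harmless degenerate cases such as $S_{k_1}=0$, in which both sides reduce to $0$ mod $p$ and the divisibility holds trivially.
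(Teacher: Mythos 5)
Your proposal is correct and follows essentially the same route as the paper's own proof: set $k' = k - n(p-1)$, observe $k' \equiv k \pmod{p^3-p}$ with $k' \ge 4$ even (the $k \equiv 0,2$ case split existing exactly to guarantee this), apply Theorem~\ref{thm: traceclassification}~(i) and then Proposition~\ref{prop: divpolycong}, and match the table indices via $12 \mid p^3-p$. Your additional remarks on the degenerate cases $n=0$ and $S_{k'}=0$ are correct but not needed beyond what the paper records.
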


\begin{proof}
    With $p, k, n$ as in the statement of the theorem, let
    $k' = k - n(p-1)$. Then $k' \ge 4$ is an even integer
    satisfying $k' \equiv k \pmod{p^3-p}$. Hence by Theorem~\ref{thm: traceclassification}, we have that $T_k(z) \equiv T_{k'}(z) \pmod{p}$. Now, by Proposition~\ref{prop: divpolycong}, 
    $$F(T_k;x) \equiv F(T_{k'};x) \widetilde{S}_p(x)^n x^{\delta_p \lfloor n/3 \rfloor + a} (x - 1728)^{\ep_p \lfloor n/2 \rfloor + b} \in \F_p[x],$$
    where $(a,b) = (a_{k', n(p-1)}, b_{k', n(p-1)}) =(a_{k, n(p-1)}, b_{k, n(p-1)})$ since $k' \equiv k \bmod{p^3-p}$, and therefore also mod $12$.
\end{proof}

\begin{cor}\label{cor: alphabeta}
    If $p \ge 5$ is prime, $k$ is a positive integer such that $k \equiv 12,16,18,20,22,26 \pmod{p^3-p}$, and $n = (p^2+p) \lfloor \frac{k}{p^3-p} \rfloor$, then
    $$F(T_k;x) \equiv \widetilde{S}_p(x)^n x^{\delta_p \lfloor n/3 \rfloor + a} (x - 1728)^{\ep_p \lfloor n/2 \rfloor + b} \pmod{p},$$
    where $\delta_p$ and $\ep_p$ are as in \eqref{def: deltaepsilon}, and $a,b$ are given by $a_{k, n(p-1)}, b_{k, n(p-1)}$ in Proposition~\ref{prop: table}.
\end{cor}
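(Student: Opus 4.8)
The plan is to obtain Corollary~\ref{cor: alphabeta} as an immediate specialization of Theorem~\ref{thm: preccong}, the only real work being the identification of the lower-weight trace form that appears there. First I would observe that each of the residues $12,16,18,20,22,26$ is distinct from $0$ and $2$ modulo $p^3-p$: since $p \geq 5$ we have $p^3-p \geq 120 > 26$, so these six values are genuine residues lying strictly between $2$ and $p^3-p$. Consequently $k \not\equiv 0,2 \pmod{p^3-p}$, placing us in the first case of the definition of $n$ in Theorem~\ref{thm: preccong}; the value $n = (p^2+p)\lfloor k/(p^3-p)\rfloor$ in the corollary therefore coincides exactly with the one produced there.

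Next I would invoke Theorem~\ref{thm: preccong} directly, which gives
\[
F(T_k;x) \equiv F(T_{k'};x)\,\widetilde{S}_p(x)^n\, x^{\delta_p \lfloor n/3 \rfloor + a}(x-1728)^{\ep_p \lfloor n/2 \rfloor + b} \pmod p,
\]
where $k' = k - n(p-1)$ and $(a,b) = (a_{k,n(p-1)}, b_{k,n(p-1)})$ already match the exponents claimed in the corollary. Since $(p^2+p)(p-1) = p^3-p$, we have $k' = k - (p^3-p)\lfloor k/(p^3-p)\rfloor$, so $k'$ is precisely the residue of $k$ modulo $p^3-p$, i.e. $k' \in \{12,16,18,20,22,26\}$. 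The corollary will thus follow once I show that $F(T_{k'};x) \equiv 1$ for each of these six weights.

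This last point is the crux, though it is short, and it is exactly why these six residues were singled out: they are precisely the weights for which $\dim S_{k'} = 1$. For each such $k'$, writing $k' = 12 + 4\delta + 6\ep$ with $\delta \in \{0,1,2\}$ and $\ep \in \{0,1\}$, the one-dimensional space $S_{k'}$ is spanned by the normalized cusp form $\Delta E_4^{\delta} E_6^{\ep} = q + O(q^2)$, whose leading $q$-coefficient is $1 = \dim S_{k'}$. Hence $T_{k'} = \Delta E_4^{\delta} E_6^{\ep}$, and comparing with the decomposition \eqref{eq: fdecomposition}, which here has $m = 1$, gives $F(T_{k'};x) = 1$. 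Substituting into the displayed congruence yields the stated factorization.

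I do not expect any genuine obstacle beyond this identification; the remaining work is bookkeeping. One should note the degenerate case $\lfloor k/(p^3-p)\rfloor = 0$, where $n = 0$ and $k' = k$, so that the claim collapses to the equality $F(T_k;x) = 1$ for $k \in \{12,16,18,20,22,26\}$ already established in the preceding paragraph. One may also wish to record that $k' \equiv k \pmod{12}$, which holds because $12 \mid p^3-p$, so that the exponents $(a,b)$ read off from Table~\ref{tbl: ab} are unambiguous; this agreement is, however, already built into the statement of Theorem~\ref{thm: preccong}, so nothing further is required.
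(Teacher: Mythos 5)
Your proposal is correct and takes essentially the same approach as the paper's proof: specialize Theorem~\ref{thm: preccong} and observe that $k' = k - n(p-1) \in \{12,16,18,20,22,26\}$ forces $T_{k'}(z) = \Delta(z) E_4(z)^\delta E_6(z)^\ep$ (the normalization $\Tr_{k'}(1) = 1 = \dim S_{k'}$ fixing the constant), so that $F(T_{k'};x) = 1$. Your additional checks --- that these residues avoid $0,2 \pmod{p^3-p}$ so the first case of $n$ applies, and the degenerate case $n=0$ --- are sound bookkeeping that the paper leaves implicit.
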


\begin{proof}
    By assumption, $k-n(p-1) \in \{12,16,18,20,22,26\}$, so in particular, $F(T_{k-n(p-1)};x) = 1$, as $T_{k-n(p-1)}(z)$ must be a constant multiple of the generator $\Delta(z) E_4(z)^\delta E_6(z)^\ep$ of $S_k$, with $\delta, \ep$ as defined in \eqref{eq: epdelta}. By the normalization of the $q$-coefficient $\Tr_{k-n(p-1)}(1) = 1$, it follows that this constant must be 1. The desired result now follows immediately from Theorem~\ref{thm: preccong}.
\end{proof}

\subsection{Divisor Polynomials of Modified Trace Forms}\label{subsec: thm4.3}
Theorem~\ref{thm: hatcongruence} is a consequence of the following more general result, which uses Theorem~\ref{thm: hatclassification} and Proposition~\ref{prop: divpolycong}.
\begin{theorem}\label{thm: prechatcong}
    If $p \geq 5$ is prime and $k \geq p^2-1$ is an even integer such that $k \equiv 0,4,6,8,10,14 \pmod{p^2-1}$, then
    \begin{equation*}
        F(\widehat{T}_k^{(p)};x) \equiv m \cdot F(\widehat{T}_{k-n(p-1)}^{(p)};x) \widetilde{S}_p(x)^n x^{\delta_p \lfloor n/3 \rfloor + a} (x - 1728)^{\ep_p \lfloor n/2 \rfloor + b} \pmod{p},
    \end{equation*}
    where $m = \lfloor \frac{k}{p^2-1} \rfloor$, $n = (p+1)(m-1)$, and $a,b \in \{0,1\}$ are again given by $a_{k,n(p-1)}, b_{k,n(p-1)}$ in Proposition~\ref{prop: table}. In particular, $\widetilde{S}_p(x)^n$ divides $F(\widehat{T}_k^{(p)};x)$.
\end{theorem}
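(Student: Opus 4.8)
The plan is to mirror the proof of Theorem~\ref{thm: preccong}, replacing the exact congruence of trace forms supplied by Theorem~\ref{thm: traceclassification} with the scalar congruence of modified trace forms supplied by Theorem~\ref{thm: hatclassification}. First I would write $k = k_0 + m(p^2-1)$ with $k_0 \in \{0,4,6,8,10,14\}$ and $m = \lfloor k/(p^2-1)\rfloor \ge 1$ (legitimate since $0 \le k_0 < p^2-1$ and $k \ge p^2-1$). Because $(p+1)(m-1)(p-1) = (p^2-1)(m-1)$, the quantity $n = (p+1)(m-1)$ satisfies $k - n(p-1) = k_0 + (p^2-1)$, and Theorem~\ref{thm: hatclassification} then yields the identity $\widehat{T}_k^{(p)}(z) = m\cdot \widehat{T}_{k-n(p-1)}^{(p)}(z)$ in $\F_p[[q]]$.

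The key new ingredient, compared with Theorem~\ref{thm: preccong}, is to carry the scalar $m$ through the divisor polynomial. Recalling the convention that $\widehat{T}_\ell^{(p)}$ is viewed as the reduction of a modular form of weight $\ell + p^2 - 1$, I would let $f$ be a form of weight $W := (k-n(p-1)) + (p^2-1)$ reducing to $\widehat{T}_{k-n(p-1)}^{(p)}$, and $g$ a form of weight $k + (p^2-1)$ reducing to $\widehat{T}_k^{(p)}$, so that $g \equiv m f \pmod p$. Since the divisor polynomial is linear in the form—multiplying $f$ by a constant $m$ multiplies $F(f;x)$ by $m$, as the decomposition \eqref{eq: fdecomposition} shows—I would apply Proposition~\ref{prop: divpolycong} to the pair $mf$ (weight $W$) and $g$ (weight $k+(p^2-1)$), whose weights differ by exactly $n(p-1)$. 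This produces
\begin{equation*}
F(\widehat{T}_k^{(p)};x) \equiv m\, F(\widehat{T}_{k-n(p-1)}^{(p)};x)\,\widetilde{S}_p(x)^n\, x^{\delta_p\lfloor n/3\rfloor + a'}(x-1728)^{\ep_p\lfloor n/2\rfloor + b'} \pmod p,
\end{equation*}
where $(a',b') = (a_{W,n(p-1)}, b_{W,n(p-1)})$.

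It then remains to check that these table arguments are the ones claimed, namely $(a',b') = (a_{k,n(p-1)}, b_{k,n(p-1)})$. As the entries of Table~\ref{tbl: ab} depend only on the weights mod $12$, it suffices to verify $W \equiv k \pmod{12}$, and here I would compute $W - k = (p^2-1) - n(p-1) = (p^2-1)(2-m)$, which is divisible by $12$ because $p \ge 5$ forces $p^2 \equiv 1 \pmod{24}$, hence $p^2 - 1 \equiv 0 \pmod{12}$. The divisibility conclusion then follows since $\widetilde{S}_p(x)^n$ appears as an explicit factor on the right-hand side; this remains valid in the degenerate case $m \equiv 0 \pmod p$, where $\widehat{T}_k^{(p)} \equiv 0$ and $\widetilde{S}_p(x)^n$ trivially divides $0$.

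The argument is essentially routine once Theorem~\ref{thm: hatclassification} and Proposition~\ref{prop: divpolycong} are in hand; the only genuinely new point, and thus the main thing to get right, is the bookkeeping around the scalar $m$—both its linear passage through the divisor polynomial and the congruence $W \equiv k \pmod{12}$ guaranteeing that the correction exponents $a,b$ are read off from the same residues as in the statement.
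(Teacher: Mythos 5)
Your proposal is correct and follows essentially the same route as the paper's proof: invoke Theorem~\ref{thm: hatclassification} for the scalar congruence $\widehat{T}_k^{(p)} = m\cdot\widehat{T}_{k-n(p-1)}^{(p)}$, view each $\widehat{T}_\ell^{(p)}$ as the reduction of a form of weight $\ell+p^2-1$ via $\Theta^{p-1}$, apply Proposition~\ref{prop: divpolycong}, and identify the table indices using $p^2-1\equiv 0\pmod{12}$. Your explicit handling of the scalar $m$ through the linearity of $F(\,\cdot\,;x)$ and of the degenerate case $m\equiv 0\pmod p$ are points the paper leaves implicit, but they match its argument in substance.
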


\begin{proof}
    Let $p, k, m$ be as in the statement of the theorem.
    Theorem~\ref{thm: hatclassification} gives that
    $$\widehat{T}_k^{(p)}(z) = m \cdot \widehat{T}_{k-(m-1)(p^2-1)}^{(p)}(z).$$
    As in the remark following Theorem~\ref{thm: hatclassification}, we note that $\widehat{T}_k^{(p)}(z) \equiv \Theta^{p-1} T_k(z) \pmod p$. Since for $f \in M_k$ with integral coefficients, $\Theta(f)$ is the mod $p$ reduction of a modular form of weight $k+p+1$, it follows that $\widehat{T}_k^{(p)}$ is the mod $p$ reduction of a modular form of weight $k + p^2 - 1$  \cite[Proposition 2.44]{OnoCBMS}. Similarly, $\widehat{T}_{k-(m-1)(p^2-1)}^{(p)}$ is the reduction of a form of weight $k-(m-2)(p^2-1)$.
    Using these weights to compute divisor polynomials for $\widehat{T}_k^{(p)}(z)$ and $\widehat{T}_{k-(m-1)(p^2-1)}^{(p)}(z)$, we apply Proposition~\ref{prop: divpolycong} to get
    $$F(\widehat{T}_k^{(p)};x) \equiv m \cdot F(\widehat{T}_{k-n(p-1)}^{(p)};x) \widetilde{S}_p(x)^n x^{\delta_p \lfloor n/3 \rfloor + a} (x - 1728)^{\ep_p \lfloor n/2 \rfloor + b} \in \F_p[x],$$
    where $n = (m-1)(p+1)$, and $(a,b) = (a_{k-(m-2)(p^2-1), n(p-1)}, b_{k-(m-2)(p^2-1), n(p-1)}) = (a_{k,n(p-1)}, b_{k, n(p-1)})$ since $p^2-1 \equiv 0 \pmod{12}$.
\end{proof}

\section{Examples}\label{sec: numerics}
We conclude by giving a number of examples of Proposition~\ref{prop: divpolycong} as well as Theorems~\ref{thm: preccong} and \ref{thm: prechatcong}. These examples were computed using the SageMath code given in \cite{code}.
\begin{example}
    Let $p = 23$ and $k = 12172$. Note that $k = 28+ p(p^2 - 1)$, hence $k$ and 28 satisfy the condition of Theorem~\ref{thm: traceclassification} (i). We have that
    \begin{align*}
        T_{12172}(z) &= 1014q + 625630\dots201640q^2 + \dots \\
         &\equiv 2q + 18q^3 + 9q^4 + 18q^5 + 9q^6 + \dots \pmod{23}
    \end{align*}
    where the coefficient of $q^2$ is $\approx 6.3 \times 10^{1831}$. The divisor polynomial of $T_{12172}(z)$ is then
    \begin{align*}
        F(T_{12172}; x) \equiv 2(x + 7)(x + 4)^{552} x^{184}  (x + 20)^{276} \pmod{23},
    \end{align*}
    while
    \begin{equation*}
        F(T_{28}; x) \equiv 2(x + 7) \pmod{23}.
    \end{equation*}
    Therefore,
    \begin{equation*}
        F(T_{12172}; x) \equiv F(T_{28}; x)\widetilde{S}_{23}(x)^n x^{\lfloor n/3 \rfloor} (x -1728)^{\lfloor n/2 \rfloor} \pmod{23}
    \end{equation*}
    for $n = (23^2 + 23)\lfloor \frac{12172}{23^3 - 23} \rfloor = 552$, as indicated by Theorem~\ref{thm: preccong}.
\end{example}

\begin{example}
    Let $p = 5$, $k_1 = 28 = 4 + (p^2 - 1)$, and $k_2 = 76 = 4+ 3(p^2 - 1)$. Then $k_1$ and $k_2$ satisfy the conditions of Theorem~\ref{thm: traceclassification} (ii). Using the Eichler-Selberg trace formula \eqref{eq: traceformula}, we find that
    \begin{align*}
        3\cdot T_{28}(z) &= 6q - 24840q^2 - 3858840q^3 + \dots, \\
        T_{76}(z) &= 6q - 57080822040q^2 - 785092363818710040q^3 + \dots.
    \end{align*}
    As Theorem~\ref{thm: traceclassification} indicates, we obtain that
    \begin{equation*}
        3\cdot T_{28}(z) \equiv T_{76}(z) \equiv q + 3q^4 + 2q^6 + 2q^9 + \dots \pmod{5}.
    \end{equation*}
    Moreover, the divisor polynomials $F(T_{k_1};x)$ and $F(T_{k_2};x)$ are such that
    \begin{align*}
        3 \cdot F(T_{28}; x) &\equiv \phantom{x^4}(x + 4) \pmod{5}, \\
        F(T_{76}; x) &\equiv x^4(x +4) \pmod{5}.
    \end{align*}
    Therefore, we find that 
    \begin{align*}
        F(T_{76}; x) \equiv 3\cdot F(T_{28}; x) \widetilde{S}_5(x)^n x^{\lfloor n/3 \rfloor}\pmod{5}
    \end{align*}
    for $n = \frac{78 - 26}{5 - 1} = 12$, which is in agreement with Proposition~\ref{prop: divpolycong}.
\end{example}

\begin{example}\label{ex: three}
    Let $p = 17$ and $k = 582$. Note that $k = 6+2(p^2 - 1) $, hence $k$ satisfies the conditions of Theorem~\ref{thm: prechatcong}. We have that
    \begin{equation*}
        \widehat{T}^{(17)}_{582}(z) = 14q + 3q^4 + 12q^9 + 13q^{13} + 16q^{15} + \dots.
    \end{equation*}
    The divisor polynomial of $\widehat{T}^{(17)}_{582}(z)$ is then
    \begin{equation*}
        F(\widehat{T}^{(17)}_{582}; x) \equiv 14 \cdot x^{14} (x + 9)^{42}Q(x) \pmod{17},
    \end{equation*}
    while, for $6+p^2-1 = 294$,
    \begin{equation*}
        F(\widehat{T}^{(17)}_{294}; x) \equiv 7 \cdot x^8 (x + 9)^{24}Q(x) \pmod{17},
    \end{equation*}
    where $Q(x) \in \F_p[x]$ is the irreducible polynomial
    \begin{equation*}
        Q(x) \equiv x^{15} + 13x^{14} + 10x^{13} + 13x^{12} + 4x^{11} + 8x^{10} + 13x^9 + 14x^8 + 15x^7 + 7x^6 + 4x^3 + 13x^2 + 8 \pmod{17}.
    \end{equation*}
    Thus
    \begin{equation*}
        F(\widehat{T}^{(17)}_{582}; x) \equiv 2 \cdot F(\widehat{T}^{(17)}_{294}; x) \widetilde{S}_{17}(x)^n x^{\lfloor n/3 \rfloor} \pmod{17}
    \end{equation*}
    for $n = (17 + 1)(\lfloor \frac{582}{17^2 - 1} \rfloor - 1) = 18$,
    as indicated by Theorem~\ref{thm: prechatcong}.
\end{example}

\bibliography{references}

\providecommand{\bysame}{\leavevmode\hbox to3em{\hrulefill}\thinspace}
\providecommand{\MR}{\relax\ifhmode\unskip\space\fi MR }
\providecommand{\MRhref}[2]{%
  \href{http://www.ams.org/mathscinet-getitem?mr=#1}{#2}
}
\providecommand{\href}[2]{#2}
\begin{thebibliography}{1}

\bibitem{Ahlgren}
S.~Ahlgren and K.~Ono, \emph{Weierstrass points on {$X_0(p)$} and supersingular
  {$j$}-invariants}, Math. Ann. \textbf{325} (2003), no.~2, 355--368.

\bibitem{KroneckerHurwitz}
J.~Gierster, \emph{Über {R}elationen zwischen {K}lassenzahlen bin\"{a}rer
  quadratischer {F}ormen von negativer {D}eterminante}, Math. Ann. \textbf{21}
  (1883), no.~1, 1--50.

\bibitem{code}
K.~Gomez, \emph{Supersingular loci from traces of hecke operators},
  \url{https://gist.github.com/kg583/ed0911aea395690c7dbe87e14ad55616}, 2021.

\bibitem{K-Z}
M.~Kaneko and D.~Zagier, \emph{Supersingular {$j$}-invariants, hypergeometric
  series, and {A}tkin's orthogonal polynomials}, Computational perspectives on
  number theory ({C}hicago, {IL}, 1995), AMS/IP Stud. Adv. Math., vol.~7, Amer.
  Math. Soc., Providence, RI, 1998, pp.~97--126.

\bibitem{ZagierTraceFormula}
S.~Lang, \emph{Introduction to modular forms}, Grundlehren der Mathematischen
  Wissenschaften, No. 222, Springer-Verlag, Berlin-New York, 1976.

\bibitem{OnoCBMS}
K.~Ono, \emph{The web of modularity: arithmetic of the coefficients of modular
  forms and {$q$}-series}, CBMS Regional Conference Series in Mathematics, vol.
  102, Published for the Conference Board of the Mathematical Sciences,
  Washington, DC; by the American Mathematical Society, Providence, RI, 2004.

\bibitem{Serre}
J.-P. Serre, \emph{Congruences et formes modulaires [d'apr\`es {H}. {P}. {F}.
  {S}winnerton-{D}yer]}, S\'{e}minaire {B}ourbaki, 24e ann\'{e}e (1971/1972),
  {E}xp. {N}o. 416, 1973, pp.~319--338. Lecture Notes in Math., Vol. 317.

\bibitem{Silverman}
J.~H. Silverman, \emph{The arithmetic of elliptic curves}, Graduate Texts in
  Mathematics, vol. 106, Springer-Verlag, New York, 1986.

\end{thebibliography}
\bibliographystyle{amsplain}

\end{document}